\newtheorem{thm}{Theorem}[section]
\newtheorem{prop}[thm]{Proposition}
\newtheorem{lem}[thm]{Lemma}
\newtheorem{cor}[thm]{Corollary}
\theoremstyle{definition}
\newtheorem{rem}[thm]{Remark}
\newtheorem{nota}[thm]{Notation}
\newtheorem{exa}[thm]{Example}
\newcommand{\neutralize}[1]{\expandafter\let\csname c@#1\endcsname\count@}
\newenvironment{thmp}[1]
    {
        \neutralize{thm}\phantomsection
        \begin{thm}}
    {\end{thm}}
\newenvironment{propp}[1]
    {
        \neutralize{prop}\phantomsection
        \begin{prop}}
    {\end{prop}}
\newcommand{\bC}{\mathbb{C}}
\newcommand{\bE}{\mathbb{E}}
\newcommand{\bfZ}{\mathbf{Z}}
\newcommand{\bfi}{\mathbf{i}}
\newcommand{\bfj}{\mathbf{j}}
\newcommand{\bfp}{\mathbf{p}}
\newcommand{\bfx}{\mathbf{x}}
\newcommand{\C}{\mathrm{C}}
\newcommand{\Comp}{\mathrm{Comp}}
\newcommand{\End}{\mathrm{End}}
\newcommand{\GL}{\mathrm{GL}}
\newcommand{\Hom}{\mathrm{Hom}}
\newcommand{\Imm}{\mathrm{Imm}}
\newcommand{\Ind}{\mathrm{Ind}}
\newcommand{\Orb}{\mathrm{Orb}}
\newcommand{\Res}{\mathrm{Res}}
\newcommand{\SST}{\mathrm{SST}}
\newcommand{\Stab}{\mathrm{Stab}}
\newcommand{\Sym}{\mathrm{Sym}}
\newcommand{\Tr}{\mathrm{Tr}}
\newcommand{\WComp}{\mathrm{WComp}}
\newcommand{\Wg}{\mathrm{Wg}}
\newcommand{\diag}{\mathrm{diag}}
\newcommand{\mult}{\mathrm{mult}}
\newcommand{\sgn}{\mathrm{sgn}}
\newcommand{\triv}{\mathrm{triv}}
\newcommand{\sfM}{\mathsf{M}}
\newcommand{\sfe}{\mathsf{e}}
\newcommand{\sfm}{\mathsf{m}}
\newcommand{\sfs}{\mathsf{s}}
\newcommand{\la}{\langle}
\newcommand{\ra}{\rangle}
\author{Jacob Campbell}
\address{Department of Pure Mathematics, University of Waterloo, 200 University Avenue West, Waterloo, Ontario N2L 3G1, Canada}
\email{j48campb@uwaterloo.ca}
\title{Commutators in finite free probability, I}
\begin{document}

\maketitle

\begin{abstract}
    This paper describes the expected characteristic polynomial of the commutator of randomly rotated matrices, in the context of the finite free probability theory initiated by Marcus, Spielman, and Srivastava. The key technical features are the use of Weingarten calculus to translate the random matrix problem into one of combinatorial representation theory, followed by some applications of the Goulden-Jackson immanant formula and the classical theory of permutation modules.
\end{abstract}

\section{Introduction}

In recent years, the pioneering work \cite{MSS22} of Marcus, Spielman, and Srivastava has spawned a theory of \emph{finite free probability}, which is concerned with random unitary rotations of $d \times d$ matrices, whose behaviour at the level of roots and characteristic polynomials parallels (and converges to) free probability. Marcus \cite{M21} has worked out an analytic approach, with finite $R$- and $S$-transforms that parallel Voiculescu's original development of free probability in the 1980s. A combinatorial approach, with finite free cumulants, has been initiated in \cite{AP18,AGP21}, in parallel with the work of Nica and Speicher in the 1990s.

The starting point is the following pair of operations on polynomials:

\begin{nota}
    \label{nota:PolyConv}
    Let $p(x)$ and $q(x)$ be monic polynomials with degree $d$, say
    \[ p(x) = \sum_{k=0}^d x^{d-k} (-1)^k a_k \text{ and } q(x) = \sum_{k=0}^d x^{d-k} (-1)^k b_k \text{.} \]
    Then define
    \[ p(x) \boxplus_d q(x) := \sum_{k=0}^d x^{d-k} (-1)^k \left( \sum_{i+j=k} \frac{(d-i)! (d-j)!}{d! (d-k)!} a_i b_j \right) \]
    and
    \[ p(x) \boxtimes_d q(x) := \sum_{k=0}^d x^{d-k} (-1)^k \left( \frac{1}{\binom{d}{k}} a_k b_k \right) \text{.} \]
\end{nota}

As mentioned in \cite{MSS22}, these operations appeared in the literature on roots of polynomials in the 1920s. For the purposes of this paper, however, their meaning is tied to the following random matrix interpretation:

\begin{thm}[{\cite{MSS22}}]
    \label{thm:MSS}
    Pick $d \times d$ normal matrices $A$ and $B$ which have characteristic polynomials $p(x)$ and $q(x)$ respectively. Then
    \[ p(x) \boxplus_d q(x) = \bE_U c_x(A + U B U^*) \text{ and } p(x) \boxtimes_d q(x) = \bE_U c_x(A U B U^*) \]
    where $U$ is a $d \times d$ random unitary matrix, and $c_x(\cdot)$ is the characteristic polynomial.
\end{thm}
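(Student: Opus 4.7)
The plan is to prove the two identities coefficient by coefficient via
\[ c_x(M) = \sum_{k=0}^d x^{d-k} (-1)^k e_k(M), \qquad e_k(M) := \sum_{|S|=k} \det(M_{S,S}), \]
so that it suffices to show $\bE_U \, e_k(AUBU^*) = e_k(A)e_k(B)/\binom{d}{k}$ and $\bE_U \, e_k(A+UBU^*) = \sum_{i+j=k} \frac{(d-i)!(d-j)!}{d!(d-k)!} e_i(A) e_j(B)$. Both sides of each claim are separately class functions of $A$ and of $B$, so by bi-invariance of the Haar measure on $U$ I may replace $A$ and $B$ by their diagonalizations $\diag(\alpha_1,\ldots,\alpha_d)$ and $\diag(\beta_1,\ldots,\beta_d)$ from the outset.

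\textbf{Multiplicative case.} Diagonality of $A$ gives $\det((AM)_{S,S}) = \prod_{i \in S}\alpha_i \cdot \det(M_{S,S})$ for any $M$, and one application of Cauchy--Binet with diagonal $B$ gives $\det((UBU^*)_{S,S}) = \sum_{|T|=k} |\det(U_{S,T})|^2 \prod_{j \in T}\beta_j$. Combining and summing over $|S|=k$ yields
\[ e_k(AUBU^*) = \sum_{|S|=|T|=k} \Big(\prod_{i \in S}\alpha_i\Big)\Big(\prod_{j \in T}\beta_j\Big) |\det(U_{S,T})|^2. \]
Haar invariance makes $\bE|\det(U_{S,T})|^2$ independent of $(S,T)$, and the deterministic identity $\sum_{|T|=k}|\det(U_{S,T})|^2 = 1$ (a Cauchy--Binet consequence of $U$ having orthonormal rows) forces the common value to be $1/\binom{d}{k}$. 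Substituting gives the claimed $\boxtimes_d$-coefficient.

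\textbf{Additive case.} Diagonality of $A$ makes the principal minor expand as
\[ \det\!\big((A+UBU^*)_{S,S}\big) = \sum_{T \subseteq S}\Big(\prod_{i \in S \setminus T}\alpha_i\Big)\det\!\big((UBU^*)_{T,T}\big). \]
The same averaging trick applied to $e_j(UBU^*) = e_j(B)$ yields $\bE\det((UBU^*)_{T,T}) = e_j(B)/\binom{d}{j}$ whenever $|T|=j$. Taking expectations, summing over $|S|=k$, and regrouping the pairs $(T,\,S\setminus T)$ by $j=|T|$ and $i=k-j$ (and using that a given $i$-subset of $[d]$ has $\binom{d-i}{j}$ disjoint $j$-subsets as partners) leads to
\[ \bE_U \, e_k(A+UBU^*) = \sum_{i+j=k} \frac{\binom{d-i}{j}}{\binom{d}{j}} e_i(A) e_j(B), \]
and a short factorial check turns $\binom{d-i}{j}/\binom{d}{j}$ into $(d-i)!(d-j)!/(d!(d-k)!)$, as required.

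\textbf{Anticipated obstacle.} I do not expect any real conceptual difficulty beyond keeping the row and column index sets straight through the Cauchy--Binet expansions and checking that the diagonal structure of $A$ and $B$ collapses all the auxiliary sums down to a single $k$-subset per side. The heart of the argument is the pair of Haar-symmetry identities $\bE|\det(U_{S,T})|^2 = 1/\binom{d}{k}$ and $\bE\det((UBU^*)_{T,T}) = e_{|T|}(B)/\binom{d}{|T|}$, each of which is essentially a one-line consequence of bi-invariance together with the fact that $e_k$ is a unitary invariant of its argument.
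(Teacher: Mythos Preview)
The paper does not give its own proof of this theorem; it is stated with a citation to \cite{MSS22} and used as background for the rest of the paper. So there is nothing in this paper to compare your argument against.

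That said, your proof is correct and self-contained. The reduction to diagonal $A$ and $B$ via bi-invariance of Haar measure is sound. In the multiplicative case, the Cauchy--Binet steps are right, and the key identity $\bE_U |\det(U_{S,T})|^2 = 1/\binom{d}{k}$ follows exactly as you say: the expectation is constant in $(S,T)$ by row/column permutation invariance, and the constant is fixed by $\sum_{|T|=k}|\det(U_{S,T})|^2 = \det((UU^*)_{S,S}) = 1$. In the additive case, the expansion $\det((D+M)_{S,S}) = \sum_{T\subseteq S}\big(\prod_{i\in S\setminus T} d_i\big)\det(M_{T,T})$ for diagonal $D$ is the standard multilinearity-in-rows identity, and the averaging $\bE_U \det((UBU^*)_{T,T}) = e_{|T|}(B)/\binom{d}{|T|}$ follows from permutation invariance together with the deterministic fact $e_j(UBU^*)=e_j(B)$. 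The regrouping by $(R,T)=(S\setminus T,T)$ and the factorial check are both fine.
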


After the operations of addition and multiplication, the next natural question concerns the commutator: what is the expected characteristic polynomial of
\[ A U B U^* - U B U^* A \]
for a $d \times d$ random unitary matrix $U$? In this paper, this question is answered in terms of $\boxplus_d$ and $\boxtimes_d$.

\begin{nota}
    In the setup of \cref{nota:PolyConv}, write
    \begin{align*}
        p(x) \boxminus_d q(x) :&= \sum_{k=0}^d x^{d-k} (-1)^k \left( \sum_{i+j=k} (-1)^j \frac{(d-i)! (d-j)!}{d! (d-k)!} a_i b_j \right) \\
        &= \bE_U c_x(A - U B U^*)
    \end{align*}
    for the operation of ``subtraction" with respect to $\boxplus_d$.
\end{nota}

\begin{nota}
    Write
    \[ z_d(x) = \sum_{k=0}^{\lfloor d/2 \rfloor} x^{d-2k} \binom{d}{2k} (d)_k \frac{k!}{(2k)!} \frac{d+1-k}{d+1} \]
    for $d \geq 1$.
\end{nota}

\begin{thm}
    \label{thm:Main}
    Let $A$ and $B$ be $d \times d$ normal matrices with characteristic polynomials $p(x)$ and $q(x)$ respectively. Then
    \[ \bE_U c_x(A U B U^* - U B U^* A) = (p(x) \boxminus_d p(x)) \boxtimes_d (q(x) \boxminus_d q(x)) \boxtimes_d z_d(x) \]
    where $U$ is a $d \times d$ random unitary matrix.
\end{thm}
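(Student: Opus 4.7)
My plan is coefficient-by-coefficient. Since $[x^{d-k}]\,\bE_U c_x(C) = (-1)^k\,\bE_U e_k(C)$ for $C = AUBU^* - UBU^*A$, I would use the antisymmetrizer identity
\[
e_k(C) = \frac{1}{k!}\sum_{\sigma \in S_k} \sgn(\sigma)\,\Tr_{(\bC^d)^{\otimes k}}\!\bigl(C^{\otimes k}\rho(\sigma)\bigr),
\]
where $\rho$ is the tensor-permutation action of $S_k$. Setting $X = UBU^*$ and expanding $(AX-XA)^{\otimes k}$ into $2^k$ signed tensor products (one for each choice of $AX$ or $XA$ in each slot), the standard cycle-by-cycle reduction of the trace against $\rho(\sigma)$ gives a product, over the cycles of $\sigma$, of traces of cyclic words in $A$ and $X$, with an overall sign determined by how many slots contain $XA$.

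Next, I would integrate $U$ out using Weingarten calculus. Each $X$ contributes one $U$ and one $U^*$, so each cyclic trace becomes a sum, indexed by a pair $(\alpha,\beta)\in S_k \times S_k$, of $\Wg(\alpha\beta^{-1},d)$ times a product of power-sum traces $\tr(A^m)$ and $\tr(B^n)$ whose exponents are dictated by the cycle types of certain compositions of $\sigma$, $\alpha$, $\beta$, and the long cycle. The problem at this stage is a purely combinatorial one in symmetric functions of the eigenvalues of $A$ and $B$. I would then invoke the Goulden--Jackson immanant formula to re-express these power-sum products as immanants $\Imm_{\chi^\lambda}(A)$ and $\Imm_{\chi^\mu}(B)$. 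The antisymmetrizer sign $\sgn(\sigma)$, combined with the commutator signs and the identity $\sgn\otimes\chi^\lambda = \chi^{\lambda'}$, should antisymmetrize the $A$-side into the generating polynomial $p\boxminus_d p = \bE_V c_x(A-VAV^*)$ (and analogously produce $q\boxminus_d q$), while character orthogonality against the Weingarten function --- which is diagonal on the $S_k$-isotypic components of $V^{\otimes k}$ --- together with the classical decomposition of the permutation modules $\Ind_{S_a\times S_b}^{S_k}\triv$, should separate the two sides into a $\boxtimes_d$ product.

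I expect the main obstacle to be pinning down the residual scalar factor $z_d(x)$, which must come from the part of the Weingarten sum that does not decouple cleanly between $A$ and $B$. The coefficient $\binom{d}{2k}(d)_k \frac{k!}{(2k)!}\frac{d+1-k}{d+1}$ of $x^{d-2k}$ in $z_d$ strongly suggests a sum over fixed-point-free involutions of $[2k]$ weighted by a hook-content-style dimension ratio for a two-row Specht module; verifying that the Weingarten normalization produces exactly this polynomial --- rather than a variant with the right shape but wrong constants --- will require careful bookkeeping of signs and normalizations across all three steps, and is likely the most technically demanding part of the argument.
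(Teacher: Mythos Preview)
Your initial setup is reasonable and not far from the paper's: both begin by unfolding $\sfe_k$ and applying Weingarten calculus, and both land on a sum indexed by $\lambda\vdash k$ weighted by $\dim(\lambda)^2/\sfs_\lambda(1^d)$. But from that point your plan diverges in a way that misses the mechanism which actually makes the computation collapse.

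The paper does \emph{not} pass through power-sum traces of cyclic words in $A$ and $X$. Instead, working entry-wise with $A$ diagonal, the factor $\prod_{i\in S}(a_i-a_{\sigma(i)})$ is isolated immediately, and after the Weingarten step the entire $A$-dependence is packaged as a genuine immanant $\Imm^{\lambda^T}(\delta_-(A_S))$ of the rank-two matrix $\delta_-(A_S)=(a_i-a_j)_{i,j\in S}$. Goulden--Jackson is then used in the direction $\Imm^{\lambda^T}(Y)=[z_1\cdots z_k]\,\sfs_{\lambda^T}(\text{eigenvalues of }ZY)$, and because $\delta_-(A_S)$ has rank two, $\sfs_{\lambda^T}$ vanishes unless $\ell(\lambda^T)\le 2$, i.e.\ unless $\lambda=2_k^p$. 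Moreover, on that surviving family the immanant is \emph{independent of $\lambda$ up to a sign $(-1)^p$}. This $\lambda$-independence is precisely what lets the $A$-side factor out and decouple from the $B$-side; the remaining sum over $p$ then produces $z_d(x)$ and $q\boxminus_d q$ via explicit Kostka-number transitions.

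Your route through power sums $\tr(A^m)\tr(B^n)$ does not obviously reach the difference matrix $\delta_-$, so the rank-two vanishing --- which is the whole reason the sum over $\lambda$ collapses to two-column shapes --- has no visible entry point in your argument. Your description of Goulden--Jackson (``re-express these power-sum products as immanants'') is also inverted relative to how the formula is stated and used. And the step where you say the antisymmetrizer ``should antisymmetrize the $A$-side into $p\boxminus_d p$'' is exactly the nontrivial content of the paper's Sections~4--5: it requires first the permutation-module identity (the sum $\sum_{\tau\le\pi}\chi^\lambda(\sigma\tau)=C_{\lambda,\mu}\chi^\lambda(\sigma)$, with $C_{\lambda,\mu}$ computed via Young's rule) to turn the $B$-constraint into a scalar, and then a substantial symmetric-function computation (inverse Kostka numbers, a Rothe--Hagen identity) to recognise the result as the $\boxminus_d$ coefficient. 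None of this is forced by character orthogonality or the identity $\sgn\cdot\chi^\lambda=\chi^{\lambda^T}$ alone.

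In short: the gap is not the bookkeeping for $z_d$, as you anticipate, but rather the step before --- you have not identified why only $\lambda$ with at most two columns contribute, and without that the $A$/$B$ decoupling you are counting on does not occur.
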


In the literature on finite free probability, is it typical to work with the elementary symmetric functions in the eigenvalues of a matrix, i.e. the coefficients of the characteristic polynomial:

\begin{nota}
    As mentioned above, $c_x(X)$ is the characteristic polynomial of a $d \times d$ matrix $X$, in the variable $x$. The coefficients are
    \[ c_x(X) = \sum_{k=0}^d x^{d-k} (-1)^k \sfe_k(X) \]
    where $\sfe_k(X)$ is the $k$-th elementary symmetric function (reviewed in \cref{nota:Sym}) in the eigenvalues of $X$.
\end{nota}

In terms of the coefficients, the main result of this paper is

\begin{thmp}{thm:Main}
    For $0 \leq k \leq d$, we have
    \begin{align*}
        \bE_U \sfe_k(A U B U^* - U B U^* A) &= \left( \sum_{i+j=k} (-1)^i \frac{(d-i)! (d-j)!}{d! (d-k)!} \sfe_i(A) \sfe_j(A) \right) \\
        &\qquad \left( \sum_{i+j=k} (-1)^i \frac{(d-i)! (d-j)!}{d! (d-k)!} \sfe_i(B) \sfe_j(B) \right) \\
        &\qquad \frac{(d-k)!}{(d-k/2)!} (k/2)! \frac{d+1-k/2}{d+1}
    \end{align*}
    if $k$ is even, and $0$ if $k$ is odd.
\end{thmp}

Beginning in a manner similar to \cite{CZ21}, one can use Weingarten calculus to reduce the analytic problem of computing expected symmetric functions in eigenvalues of random matrices to a problem of combinatorial representation theory. After some work on the representation theory side, one lands on a particular immanant which is non-trivial but tractable by a 1992 result of Goulden and Jackson, which relates immanants to Schur polynomials. Then, the proof of \cref{thm:Main} amounts to symmetric function computations.

It is known by now \cite{M21,AP18,AGP21} that the operations $\boxplus_d$ and $\boxtimes_d$ converge, respectively, to free additive and multiplicative convolution as $d \to \infty$. An important demonstration of the power of free probability theory -- specifically, of free cumulants -- was the description of the commutator of free random variables by Nica and Speicher in \cite{NS98}. (It should be noted that \cref{thm:Main} is quite reminiscent of their result, when the latter is phrased in terms of $R$-transforms.) The argument in their paper is combinatorial, using free cumulants and the notion of $R$-diagonality, and ultimately coming down to large non-trivial combinatorial cancellations. One must ask, then, how \cref{thm:Main} connects with finite free cumulants for finite $d$, and with free probability as $d \to \infty$. These questions will be taken up in a future paper \cite{C}.

Apart from this introduction and a section of preliminaries, the paper is organized as follows: \cref{sec:Immanant} is a self-contained computation of the immanants of a class of low-rank matrices, to be used later. \cref{sec:ExpSym} begins with a straightforward application of Weingarten calculus to manipulate the expected elementary symmetric functions of the commutator into a form which suggests that the dependence on $A$ is only through the immanant of a certain low-rank matrix derived from its eigenvalues. To substantiate this, some work is needed on the representation-theoretic description of the Weingarten function; this is more or less textbook material in the representation theory of finite groups and is done in \cref{sec:Young}. The remainder of \cref{sec:ExpSym} completes the proof of \cref{thm:Main}, using the results of \cref{sec:Immanant} and \cref{sec:Young}, taking for granted some rather heavy symmetric function computations which are done in \cref{sec:Transition}.

\section{Preliminaries and notation}

\begin{nota}[Integer and set partitions]
    The notation $\lambda \vdash k$ means that $\lambda$ is a partition of $k$, or equivalently a Young diagram with $k$ boxes; $\lambda^T$ is the transpose of $\lambda$. Write $\rho^{\lambda} : S_k \to \GL(V^{\lambda})$ for the irreducible representation of $S_k$ labeled by $\lambda$, and the same for the linear extension $\rho^{\lambda} : \bC[S_k] \to \End(V^{\lambda})$. The character of $\rho^{\lambda}$ will be written as $\chi^{\lambda}$. (A good reference on the representation theory of $S_k$ is \cite{CST-book}.)

    There is a particular type of partition for which we will make special notation: for $0 \leq p \leq \lfloor k/2 \rfloor$, write
    \[ 2_k^p := (\underbrace{2,\ldots,2}_p,\underbrace{1,\ldots,1}_{k-2p}) \]
    which is a partition of $k$ with length $k-p$.
    
    The relation $\trianglelefteq$ on partitions of $k$, called the \emph{dominance ordering}, is defined by $\mu \trianglelefteq \lambda$ when $\sum_{i=1}^m \mu_i \leq \sum_{i=1}^m \lambda_i$ for all $m \geq 1$.
    
    Write $P(k)$ for the set of partitions of the set $[k] := \{ 1,\ldots,k \}$. For $\pi \in P(k)$, we will use the notation $t(\pi)$ for the \emph{integer} partition of $k$ whose parts are the sizes of the blocks in $\pi$, in non-increasing order.
\end{nota}

\begin{exa}
    The partitions of $6$ which are ``below" $(2,2,2)$ in the dominance ordering are $(1,1,1,1,1,1)$, $(2,1,1,1,1)$, $(2,2,1,1)$, and $(2,2,2)$.
\end{exa}

\begin{nota}[Integer compositions]
    Write $\Comp(k)$ for the set of compositions of $k$, and $\WComp(k)$ for the set of weak compositions of $k$. For $I \in \Comp(k)$, write
    \[ S_I := S_{I_1} \times \cdots \times S_{I_l} \text{,} \]
    viewed as a subgroup of $S_k$ in the obvious way. This is called a \emph{Young subgroup}.
\end{nota}

\subsection{Symmetric functions}

We will use the language of symmetric functions extensively; a comprehensive reference is \cite{M-book}, as well as \cite{CST-book} for particular aspects of the theory.

\begin{nota}
    \label{nota:Sym}
    We use lower-case sans-serif letters for symmetric functions. To define the \emph{monomial} symmetric functions, consider $\lambda \vdash k$ as an infinite sequence $(\lambda_1,\ldots,\lambda_l,0,0,\ldots)$ and let $S_{\infty}$ act in the obvious way. Then we write
    \[ \sfm_{\lambda}(\bfx) = \sum_{I \in \Orb(\lambda)} x_1^{I_1} x_2^{I_2} \cdots \]
    which makes sense because $I$ has finitely many non-zero entries. The special cases $\sfm_{(1^k)}$, where $(1^k) := (1,\ldots,1) \vdash k$, are denoted by $\sfe_k$, and these are called the \emph{elementary} symmetric functions. The symmetric functions $\sfe_{\lambda}$, for $\lambda \vdash k$, are defined multiplicatively.
\end{nota}

\begin{exa}
    Let $\lambda = (2,2,1) \vdash 5$. Then
    \[ \sfm_{\lambda}(x_1,x_2,x_3) = x_1^2 x_2^2 x_3 + x_1^2 x_2 x_3^2 + x_1 x_2^2 x_3^2 \]
    and
    \begin{align*}
        \sfe_{\lambda}(x_1,x_2,x_3) &= (x_1 x_2 + x_1 x_3 + x_2 x_3)^2 (x_1 + x_2 + x_3) \\
        &= (x_1^2 x_2^2 + x_1^2 x_3^2 + x_2^2 x_3^2 + 2 x_1^2 x_2 x_3 + 2 x_1 x_2^2 x_3 + 2 x_1 x_2 x_3^2) \\
        &\qquad (x_1 + x_2 + x_3) \\
        &= x_1^3 x_2^2 + x_1^2 x_2^3 + x_1^3 x_3^2 + x_1^2 x_3^3 + x_2^3 x_3^2 + x_2^2 x_3^3 \\
        &\qquad + 2 x_1^3 x_2 x_3 + 2 x_1 x_2^3 x_3 + 2 x_1 x_2 x_3^3  \\
        &\qquad + 5 x_1^2 x_2^2 x_3 + 5 x_1^2 x_2 x_3^2 + 5 x_1 x_2^2 x_3^2 \text{.}
    \end{align*}
    An attentive reader might notice that the latter is
    \[ \sfm_{(3,2)}(x_1,x_2,x_3) + 2\sfm_{(3,1,1)}(x_1,x_2,x_3) + 5 \sfm_{(2,2,1)}(x_1,x_2,x_3) \]
    which is a special case of the more general relationship between the elementary and monomial bases of the algebra of symmetric functions. This will be explained and used in \cref{sec:Transition}.
\end{exa}

\begin{nota}[Schur polynomials]
    For $\lambda \vdash k$, let $\SST(\lambda)$ be the set of semistandard Young tableaux, i.e. the Young tableaux whose rows are non-decreasing and whose columns are strictly increasing. For $T \in \SST(\lambda)$, the \emph{weight} of $T$, denoted by $\omega(T) = (\omega_1(T),\ldots,\omega_k(T))$, is defined by letting $\omega_i(T)$ be the number of $i$s in $T$. The polynomial
    \[ \sfs_{\lambda}(x_1,\ldots,x_k) := \sum_{T \in \SST(\lambda)} x_1^{\omega_1(T)} \cdots x_k^{\omega_k(T)} \]
    is called the \emph{Schur polynomial} labeled by $\lambda$.
\end{nota}

\begin{lem}
    \label{lem:Hooks}
    We have
    \[ \dim(2_k^p) = \frac{k! (k-2p+1)}{p! (k-p+1)!} \]
    and
    \[ \sfs_{2_k^p}(1^d) = \frac{k-2p+1}{p! (k-p+1)!} (d+1)_p (d)_{k-p} \]
    for $0 \leq p \leq \lfloor k/2 \rfloor$.
\end{lem}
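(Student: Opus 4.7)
The plan is to invoke two standard formulas: the hook length formula
\[ \dim(\lambda) = \frac{k!}{\prod_{(i,j) \in \lambda} h(i,j)} \]
for dimensions of irreducible $S_k$-representations, and the hook-content formula
\[ \sfs_\lambda(1^d) = \prod_{(i,j) \in \lambda} \frac{d + c(i,j)}{h(i,j)} \]
for principal specializations of Schur polynomials, where $h(i,j) = \lambda_i + \lambda'_j - i - j + 1$ is the hook length and $c(i,j) = j - i$ is the content. With these in hand, both identities in \cref{lem:Hooks} reduce to a direct tally of hooks and contents over the $k$ boxes of the Young diagram of $2_k^p = (2^p, 1^{k-2p})$, whose conjugate partition is $(k-p, p)$. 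The shape can be pictured as a $p \times 2$ rectangular cap stacked on a single-column tail of height $k - 2p$.

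To execute the tally, I would partition the boxes into three classes. First, the $p$ boxes $(i,2)$ in the second column: $h(i,2) = p - i + 1$ and $c(i,2) = 2 - i$, contributing hook product $p!$ and $(d+c)$-product $(d+1)_p$. Second, the upper boxes $(i,1)$ of the first column with $1 \le i \le p$: $h(i,1) = k - p - i + 2$ and $c(i,1) = 1 - i$. Third, the lower boxes $(i,1)$ with $p+1 \le i \le k-p$: $h(i,1) = k - p - i + 1$ and still $c(i,1) = 1 - i$. The hooks from these latter two classes combine to $\frac{(k-p+1)!}{(k-2p+1)!} \cdot (k-2p)! = \frac{(k-p+1)!}{k-2p+1}$, while the $(d+c)$-factors in the first column telescope to $(d)_{k-p}$.

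Multiplying across all three classes gives $\prod_{(i,j)} h(i,j) = \frac{p!\,(k-p+1)!}{k-2p+1}$ and $\prod_{(i,j)}(d + c(i,j)) = (d+1)_p (d)_{k-p}$; substituting into the two formulas yields exactly the stated expressions. The only care needed is with the degenerate boundary cases $k - 2p \in \{0,1\}$, where the third class is empty or reduces to a single box; these are handled by the convention that empty products equal $1$, and all formulas extend without modification. No serious obstacle arises -- the entire argument is a bookkeeping exercise once one has correctly identified the hook lengths at the "corner" between the cap and the tail, which is the only place where the three classes need to be kept separate.
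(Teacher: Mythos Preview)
Your proposal is correct and follows essentially the same approach as the paper: both invoke the hook-length and hook-content formulas and then read off the hooks and contents of $2_k^p$ directly. The paper simply displays the hooks and contents in a figure rather than writing out the three-class tally, but the substance is identical.
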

\begin{proof}
    These follow directly from the well-known formulae
    \[ \sfs_{\lambda}(1^d) = \frac{\dim(\lambda)}{k!} \prod_{(i,j) \in \lambda} (d+c_{\lambda}(i,j)) \text{ and } \dim(\lambda) = \frac{k!}{\prod_{(i,j) \in \lambda} h_{\lambda}(i,j)} \text{,} \]
    see e.g. \cite[Theorems 4.3.3 \& 4.2.14]{CST-book} respectively. The hooks and contents are illustrated in \cref{fig:ContentsHooks}.
\end{proof}

\begin{figure}
    \centering
    \begin{tabular}{c@{\hspace{5em}}c}
        contents & hooks \\
        \ytableausetup{boxsize=3.2em} \begin{ytableau}
            0 & 1 \\
            -1 & 0 \\
            \none[\vdots] & \none[\vdots] \\
            \scriptstyle -(p-2) & \scriptstyle -(p-3) \\
            \scriptstyle -(p-1) & \scriptstyle -(p-2) \\
            -p \\
            \scriptstyle -(p+1) \\
            \none[\vdots] \\
            \scriptscriptstyle -(k-p-2) \\
            \scriptscriptstyle -(k-p-1)
        \end{ytableau} & \begin{ytableau}
            \scriptstyle k-p+1 & p \\
            \scriptstyle k-p & p-1 \\
            \none[\vdots] & \none[\vdots] \\
            \scriptstyle k-2p+3 & 2 \\
            \scriptstyle k-2p+2 & 1 \\
            k-2p \\
            \scriptstyle k-2p-1 \\
            \none[\vdots] \\
            2 \\
            1
        \end{ytableau}
    \end{tabular}
    \caption{Contents and hooks for $2_k^p$} \label{fig:ContentsHooks}
\end{figure}

\subsection{Weingarten calculus}
\label{ssec:Weingarten}

On the random matrix side, our main tool is Weingarten calculus, which reduces integration of polynomial functions on certain compact matrix groups to combinatorial representation theory. The first piece is the following integration formula, in terms of a particular sequence of class functions $(\Wg_{k,d}^U)_{k \geq 1}$ on $S_k$:

\begin{thm}[\cite{C03,CS06}]
    \label{thm:UnitaryWeingarten}
    For $k,k' \geq 1$ and $\bfi,\bfj : [k] \to [d]$ and $\bfi',\bfj' : [k'] \to [d]$, the integral
    \[ \int_{U_d} u_{\bfi(1) \bfj(1)} \cdots u_{\bfi(k) \bfj(k)} \overline{u_{\bfi'(1) \bfj'(1)}} \cdots \overline{u_{\bfi'(k') \bfj'(k')}} \, dU \]
    is
    \[ \sum_{\substack{\pi,\sigma \in S_k \\ \bfi = \bfi' \circ \pi \\ \bfj = \bfj' \circ \sigma}} \Wg_{k,d}^U(\pi^{-1} \sigma) \]
    when $k = k'$, and it is $0$ otherwise.
\end{thm}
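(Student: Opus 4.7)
The plan is to interpret the integral as a matrix entry of the orthogonal projection onto the $U_d$-invariant subspace of a tensor representation, then compute that projection via Schur-Weyl duality.

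First, letting $U_d$ act on $V := (\bC^d)^{\otimes k} \otimes ((\bC^d)^*)^{\otimes k'}$ by $U \mapsto U^{\otimes k} \otimes (\overline{U})^{\otimes k'}$, the integrand is a specific matrix entry of this representation. By the standard averaging argument for compact groups, its integral against Haar measure is the corresponding matrix entry of the orthogonal projection $P : V \to V^{U_d}$ onto the invariant subspace.

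Second, the case $k \neq k'$ is handled by restricting to the central subgroup $\{ e^{i\theta} I : \theta \in [0,2\pi) \} \subset U_d$. The integrand transforms under this subgroup by the scalar $e^{i\theta(k-k')}$, so averaging over $\theta$ alone already forces the integral to vanish unless $k = k'$. Thereafter, I would assume $k = k'$ and identify $V$ with $\End((\bC^d)^{\otimes k})$ in the standard way, so that $V^{U_d}$ becomes the space of $U_d$-equivariant endomorphisms. By Schur-Weyl duality, this space is spanned by the permutation operators $V_\sigma$ ($\sigma \in S_k$) which act by permuting tensor factors. The Gram matrix on this spanning set has entries $\la V_\pi, V_\sigma \ra = \Tr(V_\pi^{-1} V_\sigma) = d^{\#\text{cycles}(\pi^{-1}\sigma)}$, which is a class function of $\pi^{-1}\sigma$. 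Defining $\Wg_{k,d}^U$ so that its values realize the matrix entries of the (pseudo-)inverse of this Gram matrix, and tracking how the index permutations $\pi,\sigma$ emerge from the relations $\bfi = \bfi' \circ \pi$ and $\bfj = \bfj' \circ \sigma$ when reading off the matrix entry of $P$, yields the stated formula.

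The main obstacle is that when $d < k$ the operators $V_\sigma$ are not linearly independent, so the Gram matrix is singular and one must genuinely use a pseudo-inverse rather than an inverse. The cleanest route is to diagonalize via the $S_k$-isotypic decomposition $\bC[S_k] \cong \bigoplus_{\lambda \vdash k} \End(V^\lambda)$: convolution by the Gram matrix acts on the $\lambda$-block as multiplication by the scalar $k! \, \sfs_\lambda(1^d)/\dim(\lambda)$, which vanishes precisely when $\ell(\lambda) > d$. Inverting on the remaining blocks and re-expanding in characters recovers
\[ \Wg_{k,d}^U(\tau) = \frac{1}{(k!)^2} \sum_{\substack{\lambda \vdash k \\ \ell(\lambda) \leq d}} \frac{\dim(\lambda)^2}{\sfs_\lambda(1^d)} \chi^\lambda(\tau) \text{,} \]
from which the fact that $\Wg_{k,d}^U$ is a well-defined class function on $S_k$ is manifest.
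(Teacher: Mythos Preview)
The paper does not provide its own proof of this statement; it is quoted in the preliminaries as a known result from \cite{C03,CS06}, with no argument given. Your outline is a correct sketch of the standard proof from those references: read the integral as a matrix entry of the orthogonal projection onto $U_d$-invariants in a mixed tensor representation, dispose of the $k \neq k'$ case via the central circle, and invoke Schur--Weyl duality to identify the invariant space with the span of permutation operators, whose Gram matrix (pseudo-)inverse defines $\Wg_{k,d}^U$. The isotypic diagonalization you describe at the end is exactly how Collins--\'Sniady handle the $d < k$ degeneracy, and the character expansion you obtain is precisely what the paper records separately, again without proof, as \cref{thm:SchurWeingarten}. So there is nothing in the paper to compare against, but your proposal matches the approach of the cited sources.
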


Also important for our purposes is the following description of $\Wg^U$ in terms of the representation theory of $S_k$:

\begin{thm}[{\cite[Proposition 2.3]{CS06}}]
    \label{thm:SchurWeingarten}
    We have
    \[ \Wg_{k,d}^U = \frac{1}{(k!)^2} \sum_{\substack{\lambda \vdash k \\ \ell(\lambda) \leq d}} \frac{\dim(\lambda)^2}{\sfs_{\lambda}(1^d)} \chi^{\lambda} \]
    for $k \geq 1$.
\end{thm}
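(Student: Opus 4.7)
My plan is to derive the Schur expansion of $\Wg^U_{k,d}$ from \cref{thm:UnitaryWeingarten} by recasting the Haar integral as an orthogonal projection, and then using Schur-Weyl duality to diagonalize the resulting operator in the center of $\bC[S_k]$. The setup is as follows: let $T : \bC[S_k] \to \End((\bC^d)^{\otimes k})$ be the action by permutation of tensor factors, and write $T_\sigma := T(\sigma)$. Schur-Weyl duality asserts that $T(\bC[S_k])$ is precisely the commutant of the diagonal $U(d)$-action, and yields a $U(d) \times S_k$-equivariant decomposition $(\bC^d)^{\otimes k} = \bigoplus_{\lambda \vdash k,\, \ell(\lambda) \leq d} W^\lambda \otimes V^\lambda$ with $\dim W^\lambda = \sfs_\lambda(1^d)$.

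First I would reinterpret \cref{thm:UnitaryWeingarten} operator-theoretically. By invariance of the Haar measure, the map $X \mapsto \int_{U_d} U^{\otimes k} X (U^*)^{\otimes k}\,dU$ is the orthogonal projection (with respect to the Hilbert-Schmidt inner product on $\End((\bC^d)^{\otimes k})$) onto the $U(d)$-commutant, which is exactly $T(\bC[S_k])$. Applying this projection to the matrix units of $\End((\bC^d)^{\otimes k})$ and unpacking the entries indexed by $\bfi, \bfj, \bfi', \bfj'$, I would identify $\Wg^U_{k,d}(\pi^{-1}\sigma)$ with the $(\pi,\sigma)$-entry of the (Moore-Penrose) inverse of the Gram matrix
\[ G_{\pi\sigma} := \la T_\pi, T_\sigma \ra_{\mathrm{HS}} = \Tr(T_{\pi^{-1}\sigma}) = d^{c(\pi^{-1}\sigma)} \text{,} \]
where $c(\cdot)$ counts cycles. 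Since $G$ depends only on $\pi^{-1}\sigma$, this reduces the task to inverting the class element $f := \sum_{\sigma \in S_k} d^{c(\sigma)} \sigma$ in the center of $\bC[S_k]$.

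Next I would diagonalize $f$ using characters. The $S_k$-character of the action on $(\bC^d)^{\otimes k}$ is $\sigma \mapsto d^{c(\sigma)}$, so Schur-Weyl gives the expansion $d^{c(\sigma)} = \sum_{\ell(\lambda) \leq d} \sfs_\lambda(1^d)\, \chi^\lambda(\sigma)$. Combining this with orthogonality of characters and the central primitive idempotents $p_\lambda = \frac{\dim(\lambda)}{k!} \sum_\sigma \chi^\lambda(\sigma^{-1}) \sigma$, I would obtain $f = \sum_{\ell(\lambda) \leq d} \frac{k!\, \sfs_\lambda(1^d)}{\dim(\lambda)}\, p_\lambda$, whose (pseudo-)inverse on its support is $g := \sum_{\ell(\lambda) \leq d} \frac{\dim(\lambda)}{k!\, \sfs_\lambda(1^d)}\, p_\lambda$. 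Reading the coefficient of $\sigma$ in $g$, together with the fact that $S_k$-characters are real-valued, then produces the stated Schur expansion of $\Wg^U_{k,d}$.

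The main technical subtlety I anticipate is the degenerate regime $d < k$, where $T$ has a nontrivial kernel (spanned by antisymmetrizers on more than $d$ indices) and the Gram matrix $G$ fails to be invertible. The resolution is that the Haar integral automatically lands inside $T(\bC[S_k])$, the restriction of convolution by $f$ to that image remains invertible, and $\sfs_\lambda(1^d) = 0$ precisely when $\ell(\lambda) > d$ -- so the summation $\sum_{\ell(\lambda) \leq d}$ excludes exactly those $p_\lambda$ that act as zero on $(\bC^d)^{\otimes k}$, and the formula is both well-defined and consistent with the Moore-Penrose pseudo-inverse on the relevant block.
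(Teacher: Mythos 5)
The paper itself offers no proof of this theorem --- it is quoted from \cite[Proposition 2.3]{CS06} --- so there is no in-text argument to compare against. Your proposal is a correct reconstruction of Collins and \'{S}niady's own argument: interpret the Haar average as the orthogonal projection onto $T(\bC[S_k])$, identify $\Wg^U_{k,d}$ with the pseudo-inverse of the Gram class function $\sigma \mapsto d^{c(\sigma)} = \Tr(T_\sigma)$, and diagonalize via the primitive central idempotents $p_\lambda$ using the Schur--Weyl identity $\sum_\sigma d^{c(\sigma)}\chi^\lambda(\sigma) = k!\,\sfs_\lambda(1^d)$. Two details are worth tightening if you flesh this out: (i) passing from ``the Haar average is the orthogonal projection'' to ``$\Wg^U_{k,d}(\pi^{-1}\sigma)$ is the $(\pi,\sigma)$-entry of $G^+$'' requires matching the characterization in \cref{thm:UnitaryWeingarten} against the projection formula $P(X) = \sum_{\pi,\sigma}(G^+)_{\pi\sigma}\,\Tr(T_{\pi^{-1}}X)\,T_\sigma$ applied to matrix units, which involves some index bookkeeping you have only gestured at; and (ii) for $d<k$ the kernel of $T$ is not merely the span of antisymmetrizers on $d+1$ indices but the full two-sided ideal $\bigoplus_{\ell(\lambda)>d}\End(V^\lambda)$ of $\bC[S_k]$, which is exactly what makes the restriction to $\ell(\lambda)\le d$ coincide with the support of $G$, so the Moore--Penrose inverse and the class-function inverse agree. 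Neither point is a gap --- both are standard --- but they are where the rigor lives.
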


\section{Immanants of a rank-two matrix}
\label{sec:Immanant}

This self-contained section is dedicated to the description of the immanant of a certain type of rank $2$ matrix, in terms of symmetric functions in its eigenvalues.

\begin{nota}
    For $\lambda \vdash k$, the \emph{immanant} of a $k \times k$ matrix $X$ with respect to $\lambda$ is
    \[ \Imm^{\lambda}(X) := \sum_{\sigma \in S_k} \chi^{\lambda}(\sigma) \prod_{i=1}^k x_{i \sigma(i)} \text{.} \]
    This is a common generalization of the determinant and permanent, which are the cases $\lambda = (1^k)$ and $\lambda = (k)$ respectively.
\end{nota}

\begin{nota}
    \label{nota:DiffMatrix}
    For a $k \times k$ diagonal matrix $X = \diag(x_1,\ldots,x_k)$, write $\delta_{\pm}(X) := (x_i \pm x_j)_{i,j}$.
\end{nota}

\begin{thm}
    \label{prop:Immanant}
    Let $X$ be a $k \times k$ diagonal matrix. Then
    \[ \Imm^{\lambda}(\delta_-(X)) = \begin{cases}
        (-1)^{\lambda_2} \sum_{l=0}^k (-1)^l (k-l)! l! \sfe_{k-l}(X) \sfe_l(X) & \text{if } \ell(\lambda) \leq 2 \\
        0 & \text{otherwise}
    \end{cases} \]
    for $\lambda \vdash k$.
\end{thm}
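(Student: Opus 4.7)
Since $\delta_-(X) = x \mathbf{1}^T - \mathbf{1} x^T$ has rank at most $2$, the vanishing for $\ell(\lambda) \geq 3$ will follow from a general principle: $\Imm^\lambda(Y) = 0$ whenever $\mathrm{rank}(Y) < \ell(\lambda)$. The plan is to prove this by writing $\Imm^\lambda(Y) = \langle e, Y^{\otimes k} c_\lambda e \rangle$ where $e := e_1 \otimes \cdots \otimes e_k \in (\bC^k)^{\otimes k}$ and $c_\lambda := \sum_{\sigma} \chi^\lambda(\sigma) \sigma$; factoring $Y = AB$ with $A$ of $r$ columns and using the $S_k$-equivariance of $A^{\otimes k}$ and $B^{\otimes k}$, one pushes $c_\lambda$ through to act on $(\bC^r)^{\otimes k}$, whose $\lambda$-isotypic component vanishes for $\ell(\lambda) > r$ by Schur--Weyl duality. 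Specializing to $r = 2$ handles the $\ell(\lambda) \geq 3$ case.

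For $\lambda = (k-l,l)$ with $\ell(\lambda) \leq 2$, I would use the factorization $\delta_-(X) = A J A^T$ with $A = [\mathbf{1}, x]$ and $J = \bigl(\begin{smallmatrix} 0 & -1 \\ 1 & 0 \end{smallmatrix}\bigr)$ to rewrite the immanant as a matrix element on $(\bC^2)^{\otimes k}$:
\[ \Imm^{(k-l,l)}(\delta_-(X)) = \langle w, J^{\otimes k} c_{(k-l,l)} w \rangle, \qquad w := \bigotimes_{i=1}^k (v_1 + x_i v_2). \]
The case $l = 0$ is immediate: $c_{(k)} = k! \, \Sym_k$, so expanding $\Sym_k w = \sum_{j=0}^k \binom{k}{j}^{-1} \sfe_j(X) \sum_{|T|=j} v_{f_T}$ (with $v_{f_T}$ the tensor basis vector carrying $v_2$ at positions in $T$ and $v_1$ elsewhere), then applying $J^{\otimes k} v_{f_T} = (-1)^{|T|} v_{f_{T^c}}$ and pairing with $w$ itself, extracts
\[ \Imm^{(k)}(\delta_-(X)) = \sum_{j=0}^k (-1)^j (k-j)!\, j!\, \sfe_{k-j}(X) \sfe_j(X), \]
which matches the target formula at $\lambda_2 = 0$.

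For general $l$, the character identity $\chi^{(k-l,l)} = \chi^{M_l} - \chi^{M_{l-1}}$, where $M_l$ is the $S_k$-permutation module on $l$-subsets of $[k]$, combined with the splitting of each $\sigma$ preserving $T$ as an element of $S_T \times S_{T^c}$, gives
\[ \Imm^{(k-l,l)}(\delta_-(X)) = \Phi(l) - \Phi(l-1), \qquad \Phi(l) := \sum_{T \in \binom{[k]}{l}} \Imm^{(l)}(\delta_-(X|_T))\, \Imm^{(k-l)}(\delta_-(X|_{T^c})). \]
Since $\delta_-$ is antisymmetric, $\Imm^{(m)}(-Y) = (-1)^m \Imm^{(m)}(Y)$ forces the permanent to vanish on odd-sized blocks, so $\Phi(l) = 0$ whenever $l$ or $k - l$ is odd.

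The main obstacle is then the remaining symmetric-function identity: for $l$ and $k$ both even, $\Phi(l) = \Imm^{(k)}(\delta_-(X))$, which together with the parity vanishing packages everything into $\Imm^{(k-l,l)}(\delta_-(X)) = (-1)^l \Imm^{(k)}(\delta_-(X))$. One direct verification expands each factor $\Imm^{(|T|)}(\delta_-(X|_T))$ via the $l = 0$ formula and matches coefficients in the elementary basis, essentially a binomial-coefficient bookkeeping exercise. A more conceptual route would continue the Schur--Weyl analysis on $(\bC^2)^{\otimes k}$: since $V_{(k-l,l)}(\bC^2) \cong \Sym^{k-2l}(\bC^2)$ (as $\det J = 1$) and $J$ acts with eigenvalues $\pm i$, one can read off the $\lambda$-isotypic contribution to $\langle w, J^{\otimes k} w \rangle$ directly, with the sign $(-1)^l$ emerging from the alternation of weight-space contributions.
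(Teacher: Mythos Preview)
Your route differs genuinely from the paper's. The paper invokes the Goulden--Jackson formula (\cref{prop:GouldenJackson}): $\Imm^\lambda(\delta_-(X))$ is the coefficient of $z_1\cdots z_k$ in $\sfs_\lambda$ evaluated at the eigenvalues of $Z\delta_-(X)$, which by \cref{lem:DiffEigen} are $\pm\alpha,0,\ldots,0$; then \cref{lem:RankTwoSchur} gives $\sfs_\lambda(\alpha,-\alpha,0,\ldots,0)=(-1)^{\lambda_2}\alpha^k$ for $\ell(\lambda)\le 2$, so the $\lambda$-dependence collapses to a sign in one stroke and only the permanent remains to be computed directly. You instead handle $\ell(\lambda)\ge 3$ by a clean Schur--Weyl rank argument (avoiding the auxiliary $z$-variables entirely), compute the permanent via your tensor identity on $(\bC^2)^{\otimes k}$ (this agrees with the paper's direct expansion), and reduce the two-row cases via $\chi^{(k-l,l)}=\chi^{M_l}-\chi^{M_{l-1}}$ to $\Imm^{(k-l,l)}(\delta_-(X))=\Phi(l)-\Phi(l-1)$ together with the correct parity vanishing of $\Phi$.

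The gap is at the step you yourself flag as the main obstacle: neither sketch you offer for $\Phi(l)=\Phi(0)$ (even $l$, even $k$) is a proof. The ``direct verification'' is not routine bookkeeping---expanding both permanent factors by your $\sfe$-formula leaves a sum over $T$ together with four nested subset choices, and collapsing that to $\sum_j(-1)^j j!(k-j)!\,\sfe_j\sfe_{k-j}$ is itself a nontrivial symmetric-function identity. The ``conceptual route'' identifies only the eigenvalues of $J$ on the $\GL_2$-irreducible attached to $(k-l,l)$, equivalently the character value $\sfs_{(k-l,l)}(i,-i)$; but the quantity you actually need is the bilinear-form matrix element $\langle w_\lambda, J^{\otimes k} w_\lambda\rangle$ on the isotypic component, and you have not argued why this depends on $\lambda$ only through $(-1)^l$. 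In effect, completing either sketch amounts to reproving Goulden--Jackson in the rank-two case. The paper's approach trades self-containment for citing that result, which buys precisely the collapse-to-a-sign your argument is missing.
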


The main technical tool for the proof of \cref{prop:Immanant} is the following result of Goulden and Jackson:

\begin{prop}[{\cite[Equation (9)]{GJ92}}]
    \label{prop:GouldenJackson}
    Let $Y$ be a $k \times k$ matrix, let $\lambda \vdash k$, let $z_1,\ldots,z_k$ be formal commuting variables, write $Z := \diag(z_1,\ldots,z_k)$, and let $\alpha_1,\ldots,\alpha_k$ be the eigenvalues of $ZY$. Then $\Imm^{\lambda}(Y)$ is the coefficient of $z_1 \cdots z_k$ in $\sfs_{\lambda}(\alpha_1,\ldots,\alpha_k)$.
\end{prop}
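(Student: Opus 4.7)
The plan is to apply the Goulden--Jackson formula (\cref{prop:GouldenJackson}) with $Y = \delta_-(X)$. Writing $\bfx = (x_1, \ldots, x_k)^T$ and $\mathbf{1} = (1, \ldots, 1)^T$, we have $\delta_-(X) = \bfx \mathbf{1}^T - \mathbf{1} \bfx^T$, so $Z \delta_-(X) = (Z \bfx) \mathbf{1}^T - (Z \mathbf{1}) \bfx^T$ has rank at most two. Its diagonal entries vanish, so its trace is zero, and hence its nonzero eigenvalues are $\pm \alpha$ for some $\alpha$; restricting the eigenvalue problem to the two-dimensional image yields a $2 \times 2$ matrix whose characteristic polynomial gives
\[ \alpha^2 = \left( \sum_i z_i x_i \right)^2 - \left( \sum_i z_i \right) \left( \sum_i z_i x_i^2 \right) = -\sum_{i<j} z_i z_j (x_i - x_j)^2 \text{.} \]
Then \cref{prop:GouldenJackson} yields
\[ \Imm^\lambda(\delta_-(X)) = [z_1 \cdots z_k] \, \sfs_\lambda(\alpha, -\alpha, 0, \ldots, 0) \text{.} \]

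If $\ell(\lambda) > 2$, this Schur polynomial vanishes by its standard vanishing property in too few nonzero arguments, which handles the second case of the theorem. If $\ell(\lambda) \leq 2$, write $\lambda = (\lambda_1, \lambda_2)$ with $\lambda_1 + \lambda_2 = k$; the two-variable bialternant formula
\[ \sfs_{(\lambda_1, \lambda_2)}(y_1, y_2) = \frac{y_1^{\lambda_1 + 1} y_2^{\lambda_2} - y_1^{\lambda_2} y_2^{\lambda_1 + 1}}{y_1 - y_2} \]
evaluated at $(\alpha, -\alpha)$ simplifies to $(-1)^{\lambda_2} \alpha^k$ when $k$ is even and to $0$ when $k$ is odd. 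In the odd-$k$ case the claimed right-hand side also vanishes, by pairing $l \leftrightarrow k - l$ in the sum (whose contributions then have opposite signs), so both sides agree there.

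It remains, for $k$ even, to establish
\[ [z_1 \cdots z_k] \, \alpha^k = \sum_{l=0}^k (-1)^l (k-l)! \, l! \, \sfe_{k-l}(X) \sfe_l(X) \text{.} \]
Expanding $\alpha^k = (\alpha^2)^{k/2}$ by the multinomial theorem, only perfect matchings of $[k]$ survive in the coefficient of $z_1 \cdots z_k$, yielding $(-1)^{k/2} (k/2)! \sum_M \prod_{\{i,j\} \in M} (x_i - x_j)^2$. This final step is the main technical obstacle, since it is a nontrivial symmetric-function identity in its own right. My plan is to expand both sides in the monomial basis $\{ \sfm_{(2^j, 1^{k-2j})}(X) \}_j$: the matching sum yields explicit coefficients by counting labelings $(a_i) \in \{0, 1, 2\}^k$ of fixed type compatible with a matching, while on the right $\sfe_{k-l} \sfe_l = \sum_j \binom{k-2j}{k-l-j} \sfm_{(2^j, 1^{k-2j})}(X)$ by direct expansion. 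Matching coefficients reduces the claim to a single finite identity of the form $\sum_{s=0}^{n} (-1)^s \binom{n}{s} (s+j)!(n+j-s)! = (j+n/2)! \, n! \, j! / (n/2)!$ (for $n = k - 2j$ even), which can be verified using the Beta-integral representation $(k-l)! \, l! = \iint_{(0,\infty)^2} e^{-u-v} u^{k-l} v^l \, du \, dv$ followed by the change of variables $(u, v) \mapsto (u+v, v-u)$.
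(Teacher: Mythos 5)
What you have written is not a proof of \cref{prop:GouldenJackson}, which the paper imports from \cite{GJ92} and uses as a black box; it is an attempt at \cref{prop:Immanant}, so I will assess it as such. Your eigenvalue analysis of $Z\delta_-(X)$ and the specialization $\sfs_\lambda(\alpha,-\alpha,0,\ldots,0)=(-1)^{\lambda_2}\alpha^k$ (for $\ell(\lambda)\le 2$, $k$ even) match the content of the paper's \cref{lem:DiffEigen} and \cref{lem:RankTwoSchur}, and the odd-$k$ cancellation is handled correctly. The gap is in the final step: you reduce $[z_1\cdots z_k]\,\alpha^k$ to $(-1)^{k/2}(k/2)!\sum_M\prod_{\{i,j\}\in M}(x_i-x_j)^2$ over perfect matchings $M$ of $[k]$, and must then show this equals $\sum_{l=0}^k(-1)^l(k-l)!\,l!\,\sfe_{k-l}(X)\sfe_l(X)$, which you yourself describe as ``a nontrivial symmetric-function identity in its own right.'' Your proposal for that identity is only a sketch: neither the coefficient-matching in the monomial basis nor the binomial identity $\sum_{s=0}^{n}(-1)^s\binom{n}{s}(s+j)!(n+j-s)! = (j+n/2)!\,n!\,j!/(n/2)!$ is actually carried out, and the Beta-integral device is asserted, not executed. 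The proof is therefore incomplete.

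The paper sidesteps the matching sum entirely with a short trick you should adopt: since the coefficient of $z_1\cdots z_k$ in $\sfs_\lambda(\alpha,-\alpha,0,\ldots,0)$ depends on $\lambda$ only through the sign $(-1)^{\lambda_2}$, one may as well take $\lambda=(k)$, for which $\sfs_{(k)}(\alpha,-\alpha,0,\ldots,0)=\alpha^k$, and then $[z_1\cdots z_k]\,\alpha^k=\Imm^{(k)}(\delta_-(X))$ is simply the permanent $\sum_{\sigma\in S_k}\prod_i(x_i-x_{\sigma(i)})$. Expanding each factor into $\pm$ terms indexed by a subset $R\subseteq[k]$, swapping the order of summation, and noting that the sum over $\sigma$ for fixed $R$ contributes $|R|!\,(k-|R|)!\,\sfe_{k-|R|}(X)$ while the sum over $R$ of size $l$ gives $\sfe_l(X)$, one arrives at $\sum_l(-1)^l(k-l)!\,l!\,\sfe_{k-l}(X)\sfe_l(X)$ in a few lines. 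Replacing your final step with this direct permanent computation closes the gap.
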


\begin{lem}
    \label{lem:DiffEigen}
    Let $k \geq 1$, $X = \diag(x_1,\ldots,x_k)$, and $Z = \diag(z_1,\ldots,z_k)$. Then the characteristic polynomial of $Z \delta_-(X)$ is
    \[ \det(xI - Z \delta_-(X)) = x^k + \left( \sum_{1 \leq i < j \leq k} z_i z_j (x_i - x_j)^2 \right) x^{k-2} \]
    whose roots are
    \[ \pm i \sqrt{\sum_{i < j} z_i z_j (x_i-x_j)^2} \]
    with multiplicity $1$ each and $0$ with multiplicity $k-2$.
\end{lem}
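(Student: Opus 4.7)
The key observation is that $\delta_-(X) = x \mathbf{1}^T - \mathbf{1} x^T$, where $x = (x_1,\ldots,x_k)^T$ and $\mathbf{1}$ is the all-ones column vector; hence $\delta_-(X)$ has rank at most $2$, and so does $Z\delta_-(X)$. The plan is to factor $Z\delta_-(X)$ as the product of a $k \times 2$ and a $2 \times k$ matrix, reduce the characteristic polynomial to a $2 \times 2$ determinant via Sylvester's identity, and compute the two nonzero coefficients directly.

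Concretely, I would write
\[ Z \delta_-(X) = \begin{pmatrix} | & | \\ Zx & Z\mathbf{1} \\ | & | \end{pmatrix} \begin{pmatrix} \text{---} \mathbf{1}^T \text{---} \\ \text{---} {-x^T} \text{---} \end{pmatrix} =: B C \text{,} \]
and then invoke the identity $\det(xI_k - BC) = x^{k-2} \det(xI_2 - CB)$ (valid whenever $B$ is $k \times 2$ and $C$ is $2 \times k$). This immediately accounts for the factor $x^{k-2}$ and the vanishing of the coefficients of $x^{k-1}, x^{k-3}, \ldots, x$ in the characteristic polynomial, so it remains only to compute the $2 \times 2$ matrix
\[ CB = \begin{pmatrix} \sum_i z_i x_i & \sum_i z_i \\ -\sum_i z_i x_i^2 & -\sum_i z_i x_i \end{pmatrix} \text{.} \]

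Its trace is zero, which produces the expected absence of an $x^{k-1}$ term. For the constant term of $\det(xI_2 - CB)$, a short expansion gives
\[ \det(CB) = \Bigl(\sum_i z_i\Bigr)\Bigl(\sum_i z_i x_i^2\Bigr) - \Bigl(\sum_i z_i x_i\Bigr)^2 = \sum_{i,j} z_i z_j (x_j^2 - x_i x_j) \text{,} \]
which, upon symmetrizing under $i \leftrightarrow j$, becomes $\tfrac{1}{2} \sum_{i,j} z_i z_j (x_i - x_j)^2 = \sum_{i<j} z_i z_j (x_i-x_j)^2$ (the diagonal $i=j$ terms vanish). Substituting back yields the claimed formula for $\det(xI_k - Z\delta_-(X))$. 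The statement about the roots is then an immediate consequence: the non-trivial factor $x^2 + \sum_{i<j} z_i z_j (x_i - x_j)^2$ has the two roots $\pm i \sqrt{\sum_{i<j} z_i z_j (x_i-x_j)^2}$.

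There is no real obstacle here; the only point where one could go astray is in the symmetrization step producing the sum of squared differences, but this is routine. The nontrivial content of the lemma lies entirely in recognizing the rank-$2$ structure of $\delta_-(X)$ so that Sylvester's identity applies.
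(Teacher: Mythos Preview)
The paper states this lemma without proof, so there is no argument to compare against; your rank-two factorization together with Sylvester's identity $\det(xI_k - BC) = x^{k-2}\det(xI_2 - CB)$ is correct and is the natural way to fill the gap. One cosmetic point: you use the symbol $x$ both for the column vector $(x_1,\ldots,x_k)^T$ and for the characteristic-polynomial variable, which clashes with the paper's notation; renaming the vector (e.g.\ $\bfx$ or $v$) would keep the write-up clean.
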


\begin{rem}
    The characteristic polynomial of $Z \delta_+(X)$ is somewhat more complicated; the rank is still $2$, so all but two of the eigenvalues are $0$, but the non-zero ones differ from each other in a less trivial way. One might compare this with the difference in tractability between the commutator and anti-commutator observed in \cite{NS98}.
\end{rem}

\begin{lem}
    \label{lem:RankTwoSchur}
    We have
    \[ \sfs_{\lambda}(\alpha,\beta,0,\ldots,0) = \begin{cases}
        \alpha^k \frac{\left( \frac{\beta}{\alpha} \right)^{\lambda_2} - \left( \frac{\beta}{\alpha} \right)^{\lambda_1+1}}{1 - \frac{\beta}{\alpha}} & \text{if } \ell(\lambda) \leq 2 \\
        0 & \text{otherwise}
    \end{cases} \]
    for $\lambda \vdash k$.
\end{lem}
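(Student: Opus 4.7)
The plan is to directly enumerate the semistandard Young tableaux contributing to $\sfs_\lambda(\alpha,\beta,0,\ldots,0)$. The vanishing when $\ell(\lambda) > 2$ is the standard fact that $\sfs_\lambda(x_1,\ldots,x_n) = 0$ whenever $\ell(\lambda) > n$: no column of length exceeding $2$ can be filled with strictly increasing entries drawn from $\{1,2\}$, so the set of SSYT of shape $\lambda$ with weight supported in the first two variables is empty, and only those variables contribute when the others are set to zero. (Alternatively, one could read this off from the two-variable Weyl / bialternant formula $\sfs_\lambda(\alpha,\beta) = (\alpha^{\lambda_1+1}\beta^{\lambda_2} - \alpha^{\lambda_2}\beta^{\lambda_1+1})/(\alpha - \beta)$.)

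For $\ell(\lambda) \leq 2$, write $\lambda = (\lambda_1, \lambda_2)$. The structure of a tableau $T \in \SST(\lambda)$ with entries in $\{1,2\}$ is forced: row $2$ is entirely $2$'s (each entry must exceed what sits above it, yet be at most $2$); the first $\lambda_2$ entries of row $1$ are then forced to be $1$'s, to lie strictly below the $2$'s underneath them; and the remaining $\lambda_1 - \lambda_2$ entries of row $1$ form a weakly increasing string in $\{1,2\}$, entirely determined by the count $j \in \{0,1,\ldots,\lambda_1 - \lambda_2\}$ of $2$'s in that portion. Collecting weights gives
\[ \sfs_\lambda(\alpha,\beta,0,\ldots,0) = \sum_{j=0}^{\lambda_1 - \lambda_2} \alpha^{\lambda_1 - j} \beta^{\lambda_2 + j} \text{,} \]
after which factoring out $\alpha^k$ and reindexing with $n = \lambda_2 + j$ yields the geometric sum $\alpha^k \sum_{n=\lambda_2}^{\lambda_1} (\beta/\alpha)^n$, which telescopes to the stated closed form.

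There is no real obstacle: everything is mechanical once one reads off the structure of SSYT in two letters. The one interpretive subtlety is that the apparent singularity at $\alpha = \beta$ in the stated expression is removable, since the numerator vanishes there; the right-hand side should be read as a polynomial identity, which is transparent from the geometric-sum form above.
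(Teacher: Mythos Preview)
Your proof is correct and follows essentially the same approach as the paper: both arguments enumerate the semistandard tableaux of shape $\lambda$ with entries in $\{1,2\}$, observe that none exist when $\ell(\lambda) > 2$, parametrize them by the number of $2$'s in the tail of row $1$ when $\ell(\lambda) \leq 2$, and sum the resulting geometric series. Your remark on the removable singularity at $\alpha = \beta$ is a nice clarification, though not strictly needed for the paper's application (where $\beta = -\alpha$).
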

\begin{proof}
    If $\ell(\lambda) > 2$, then every semistandard tableau of shape $\lambda$ has $\omega_i(T) > 0$ for some $i > 2$, so
    \[ \sfs_{\lambda}(\alpha,\beta,0,\ldots,0) = \sum_{T \in \SST(\lambda)} \alpha^{\omega_1(T)} \beta^{\omega_2(T)} 0^{\omega_3(T)} \cdots = 0 \text{.} \]
    On the other hand, if $\ell(\lambda) \leq 2$, the only semistandard tableaux of shape $\lambda$ with $\omega_i(T) = 0$ for all $i > 2$ are of the form
    \[ \ytableausetup{boxsize=2em} \begin{ytableau}
        1 & \none[\cdots] & 1 & 1 & \none[\cdots] & 1 & 2 & \none[\cdots] & 2 \\
        2 & \none[\cdots] & 2
    \end{ytableau} \]
    where the first row has $0 \leq t \leq \lambda_1-\lambda_2$ boxes with $2$s. So
    \begin{align*}
        \sfs_{\lambda}(\alpha,\beta,0,\ldots,0) &= \sum_{t=0}^{\lambda_1-\lambda_2} \alpha^{k-(t+\lambda_2)} \beta^{t+\lambda_2} \\
        &= \alpha^{k-\lambda_2} \beta^{\lambda_2} \sum_{t=0}^{\lambda_1-\lambda_2} \left( \frac{\beta}{\alpha} \right)^t \\
        &= \alpha^k \left( \frac{\beta}{\alpha} \right)^{\lambda_2} \frac{1 - \left( \frac{\beta}{\alpha} \right)^{\lambda_1 - \lambda_2 + 1}}{1 - \frac{\beta}{\alpha}}
    \end{align*}
    which is the non-zero expression in the claim.
\end{proof}

\begin{proof}[Proof of \cref{prop:Immanant}]
    By \cref{prop:GouldenJackson}, \cref{lem:RankTwoSchur}, and \cref{lem:DiffEigen},
    \[ \Imm^{\lambda}(\delta_-(X)) \]
    is the coefficient of $z_1 \cdots z_k$ in
    \[ \sfs_{\lambda}(\alpha,-\alpha,0,\ldots,0) = \begin{cases}
        (-1)^{\lambda_2} \alpha^k & \text{if } \ell(\lambda) \leq 2 \\
        0 & \text{otherwise}
    \end{cases} \]
    where $\pm \alpha$ are the non-zero eigenvalues of $\delta_-(X)$. But this expression does not depend on $\lambda$ except for a straightforward sign, so we can simply take $\lambda = (k)$ and directly compute the common quantity for the cases $\ell(\lambda) \leq 2$, which is the permanent of $\delta_-(X)$:
    \begin{align*}
        &\quad \Imm^{(k)}(\delta_-(X)) \\
        &= \sum_{\sigma \in S_k} \prod_{i=1}^k (x_i-x_{\sigma(i)}) \\
        &= \sum_{\sigma \in S_k} \sum_{R \subseteq [k]} (-1)^{| R |} \prod_{i \in R} x_i \prod_{i \notin R} x_{\sigma(i)} \\
        &= \sum_{R \subseteq [k]} (-1)^{| R |} \det(X(R,R)) \left( \sum_{\substack{\sigma : [k] \to [k] \\ \text{injective}}} \prod_{i \in [k] \setminus R} x_{\sigma(i)} \right) \\
        &= \sum_{R \subseteq [k]} (-1)^{| R |} \det(X(R,R)) | R |! \left( \sum_{\substack{\sigma : [k] \setminus R \to [k] \\ \text{injective}}} \prod_{i \in [k] \setminus R} x_{\sigma(i)} \right) \\
        &= \sum_{R \subseteq [k]} (-1)^{| R |} \det(X(R,R)) | R |! (k-| R |)! \left( \sum_{\substack{\sigma : [k] \setminus R \to [k] \\ \text{increasing}}} \prod_{i \in [k] \setminus R} x_{\sigma(i)} \right) \\
        &= \sum_{l=0}^k (-1)^l \sum_{\substack{R \subseteq [k] \\ | R | = l}} \det(X(R,R)) (k-l)! l! \left( \sum_{1 \leq i_1 < \cdots < i_{k-l} \leq k} x_{i_1} \cdots x_{i_{k-l}} \right) \\
        &= \sum_{l=0}^k (-1)^l (k-l)! l! \sfe_{k-l}(X) \sfe_l(X) \text{.}
    \end{align*}
\end{proof}

\section{Expected symmetric functions}
\label{sec:ExpSym}

\begin{nota}
    In this section, fix $d \times d$ normal matrices $A$ and $B$. Observe that if $A$ and $B$ are diagonalized by unitaries as $A = V_A D_A V_A^*$ and $B = V_B D_B V_B^*$ respectively, then since $c_x(\cdot)$ is invariant under unitary conjugation,
    \begin{align*}
        &\quad \bE_U c_x(A U B U^* - U B U^* A) \\
        &= \bE_U c_x((V_A D_A V_A^*) U (V_B D_B V_B^*) U^* - U (V_B D_B V_B^*) U^* (V_A D_A V_A^*)) \\
        &= \bE_U c_x(D_A (V_A^* U V_B) D_B (V_A^* U V_B)^* - (V_A^* U V_B) D_B (V_A^* U V_B)^* D_A) \text{.}
    \end{align*}
    Due to the invariance of the Haar measure on the group of $d \times d$ unitary matrices, the above is just
    \[ \bE_U c_x(D_A U D_B U^* - U D_B U^* D_A) \]
    so we can assume without loss of generality that $A$ and $B$ are diagonal, say $A = \diag(a_1,\ldots,a_d)$ and $B = \diag(b_1,\ldots,b_d)$.
\end{nota}

Recall the main result, in terms of coefficients:

\begin{thmp}{thm:Main}
    For $0 \leq k \leq d$, we have
    \begin{align*}
        \bE_U \sfe_k(A U B U^* - U B U^* A) &= \left( \sum_{i+j=k} (-1)^i \frac{(d-i)! (d-j)!}{d! (d-k)!} \sfe_i(A) \sfe_j(A) \right) \\
        &\qquad \left( \sum_{i+j=k} (-1)^i \frac{(d-i)! (d-j)!}{d! (d-k)!} \sfe_i(B) \sfe_j(B) \right) \\
        &\qquad \frac{(d-k)!}{(d-k/2)!} (k/2)! \frac{d+1-k/2}{d+1}
    \end{align*}
    if $k$ is even, and $0$ if $k$ is odd.
\end{thmp}

In a manner similar to \cite{CZ21}, one can proceed very directly to untangle the elementary symmetric function in terms of the entries of the matrix:

\begin{lem}
    \label{lem:StartingPoint}
    We have
    \begin{align*}
        &\quad \bE_U \sfe_k(A U B U^* \pm U B U^* A) \\
        &= \frac{1}{(k!)^2} \sum_{\lambda \vdash k} \frac{\dim(\lambda)^2}{\sfs_{\lambda}(1^d)} \sum_{\substack{S \subseteq [d] \\ | S | = k}} \sum_{\bfp : S \to [d]} \left( \prod_{i \in S} b_{\bfp(i)} \right) \\
        &\qquad \sum_{\sigma \in \Sym(S)} \sgn(\sigma) \left( \sum_{\substack{\tau \in \Sym(S) \\ \bfp = \bfp \circ \tau}} \chi^{\lambda}(\sigma \tau) \right) \prod_{i \in S} (a_i \pm a_{\sigma(i)})
    \end{align*}
    for $0 \leq k \leq d$.
\end{lem}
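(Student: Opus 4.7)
The plan is to apply Weingarten calculus directly, starting from the principal-minor expansion of $\sfe_k$. Since for a $d \times d$ matrix $M$ one has $\sfe_k(M) = \sum_{|S|=k} \det(M(S,S))$, I would first write
\[
M_{ij} := (A U B U^* \pm U B U^* A)_{ij} = \sum_{p=1}^d (a_i \pm a_j) \, u_{ip} b_p \bar{u}_{jp}
\]
using that $A$ and $B$ are diagonal, and then expand the determinant of each principal $S \times S$ submatrix as a sum over $\sigma \in \Sym(S)$ with a sign. Collecting the $p$-indices into a map $\bfp : S \to [d]$ produces
\[
\sfe_k(M) = \sum_{\substack{S \subseteq [d] \\ |S|=k}} \sum_{\sigma \in \Sym(S)} \sgn(\sigma) \sum_{\bfp : S \to [d]} \Big( \prod_{i \in S} b_{\bfp(i)} \Big) \Big( \prod_{i \in S} (a_i \pm a_{\sigma(i)}) \Big) \prod_{i \in S} u_{i,\bfp(i)} \bar{u}_{\sigma(i),\bfp(i)}.
\]

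Next, I would take $\bE_U$ and apply \cref{thm:UnitaryWeingarten} to the monomial $\prod_{i \in S} u_{i,\bfp(i)} \bar{u}_{\sigma(i),\bfp(i)}$. Identifying the lower indices $\bfi$ with the inclusion $S \hookrightarrow [d]$, the upper indices $\bfj$ with $\bfp$, and the corresponding primed data with $\sigma$ and $\bfp$, the Weingarten sum runs over pairs $(\pi,\tau) \in \Sym(S)^2$ satisfying $i = \sigma(\pi(i))$ for all $i \in S$ and $\bfp = \bfp \circ \tau$. The first constraint pins $\pi = \sigma^{-1}$ uniquely, so the integral collapses to
\[
\bE_U \prod_{i \in S} u_{i,\bfp(i)} \bar{u}_{\sigma(i),\bfp(i)} = \sum_{\substack{\tau \in \Sym(S) \\ \bfp = \bfp \circ \tau}} \Wg_{k,d}^U(\sigma \tau).
\]
Then I would insert the Schur expansion of $\Wg_{k,d}^U$ from \cref{thm:SchurWeingarten}. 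The assumption $k \leq d$ ensures that every $\lambda \vdash k$ automatically has $\ell(\lambda) \leq d$, so none of the Schur-polynomial denominators vanish and the length constraint can be dropped.

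The only remaining step is to rearrange the sums, pulling the $\frac{1}{(k!)^2} \sum_{\lambda} \frac{\dim(\lambda)^2}{\sfs_{\lambda}(1^d)}$ to the outside and bundling the $\chi^{\lambda}(\sigma \tau)$ with the inner sum over $\tau$, which produces exactly the stated expression. There is no real obstacle here: the content is bookkeeping, and the two main subtleties are (i) recognizing that the Weingarten constraint $\bfi = \bfi' \circ \pi$ uniquely determines $\pi$ so that $\pi^{-1}\tau = \sigma\tau$, and (ii) justifying that the $\ell(\lambda) \leq d$ restriction in \cref{thm:SchurWeingarten} may be suppressed under the standing hypothesis $k \leq d$.
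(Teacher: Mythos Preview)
Your proposal is correct and follows essentially the same route as the paper's own proof: expand $\sfe_k$ via principal minors, write each entry of $AUBU^* \pm UBU^*A$ in the factored form $(a_i \pm a_j)\sum_p u_{ip}b_p\overline{u_{jp}}$, apply \cref{thm:UnitaryWeingarten} (where the row-index constraint forces $\pi=\sigma^{-1}$ and the column-index constraint becomes $\bfp=\bfp\circ\tau$), insert \cref{thm:SchurWeingarten}, and drop the length restriction since $k\le d$. The two subtleties you flag are exactly the ones the paper singles out.
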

\begin{proof}
     The $(i,j)$-th entry of $A U B U^* \pm U B U^* A$ is
    \[ \sum_{p=1}^d a_i u_{ip} b_p \overline{u_{jp}} \pm \sum_{p=1}^d u_{ip} b_p \overline{u_{jp}} a_j \]
    so we have
    \begin{align}
        &\quad \sfe_k(A U B U^* \pm U B U^* A) \nonumber \\
        &= \sum_{\substack{S \subseteq [d] \\ | S | = k}} \sum_{\sigma \in \Sym(S)} \sgn(\sigma) \prod_{i \in S} \sum_{p=1}^d (a_i u_{i p} b_p \overline{u_{\sigma(i) p}} \pm u_{i p} b_p \overline{u_{\sigma(i) p}} a_{\sigma(i)}) \nonumber \\
        &= \sum_{\substack{S \subseteq [d] \\ | S | = k}} \sum_{\sigma \in \Sym(S)} \sgn(\sigma) \sum_{\bfp : S \to [d]} \nonumber \\
        &\qquad \prod_{i \in S} (a_i u_{i \bfp(i)} b_{\bfp(i)} \overline{u_{\sigma(i) \bfp(i)}} \pm u_{i \bfp(i)} b_{\bfp(i)} \overline{u_{\sigma(i) \bfp(i)}} a_{\sigma(i)}) \nonumber \\
        &= \sum_{\substack{S \subseteq [d] \\ | S | = k}} \sum_{\sigma \in \Sym(S)} \sgn(\sigma) \sum_{\bfp : S \to [d]} \prod_{i \in S} (a_i \pm a_{\sigma(i)}) u_{i \bfp(i)} b_{\bfp(i)} \overline{u_{\sigma(i) \bfp(i)}} \nonumber \\
        &= \sum_{\substack{S \subseteq [d] \\ | S | = k}} \sum_{\bfp : S \to [d]} \left( \prod_{i \in S} b_{\bfp(i)} \right) \nonumber \\
        &\qquad \sum_{\sigma \in \Sym(S)} \sgn(\sigma) \prod_{i \in S} (a_i \pm a_{\sigma(i)}) \prod_{i \in S} u_{i \bfp(i)} \overline{u_{\sigma(i) \bfp(i)}} \label{eq:noexp}
    \end{align}   
    and the claim amounts to a straightforward application of \cref{thm:UnitaryWeingarten} and \cref{thm:SchurWeingarten}, with the observation that $\ell(\lambda) \leq k \leq d$ for all $\lambda \vdash k$. Namely, we have
    \begin{align*}
        \bE_U \left( \prod_{i \in S} u_{i \bfp(i)} \overline{u_{\sigma(i) \bfp(i)}} \right) &= \sum_{\substack{\pi,\tau \in \Sym(S) \\ 1 = \sigma \circ \pi \\ \bfp = \bfp \circ \tau}} \Wg_{k,d}^U(\pi^{-1} \tau) \tag{\cref{thm:UnitaryWeingarten}} \\
        &= \sum_{\substack{\tau \in \Sym(S) \\ \bfp = \bfp \circ \tau}} \Wg_{k,d}^U(\sigma \tau) \\
        &= \sum_{\substack{\tau \in \Sym(S) \\ \bfp = \bfp \circ \tau}} \frac{1}{(k!)^2} \sum_{\lambda \vdash k} \frac{\dim(\lambda)^2}{\sfs_{\lambda}(1^d)} \chi^{\lambda}(\sigma \tau) \tag{\cref{thm:SchurWeingarten}, $\ell(\lambda) \leq k \leq d$ for all $\lambda \vdash k$}
    \end{align*}
    so the expectation of \cref{eq:noexp} is
    \begin{align*}
        &\quad \sum_{\substack{S \subseteq [d] \\ | S | = k}} \sum_{\bfp : S \to [d]} \left( \prod_{i \in S} b_{\bfp(i)} \right) \sum_{\sigma \in \Sym(S)} \sgn(\sigma) \prod_{i \in S} (a_i \pm a_{\sigma(i)}) \\
        &\qquad \sum_{\substack{\tau \in \Sym(S) \\ \bfp = \bfp \circ \tau}} \frac{1}{(k!)^2} \sum_{\lambda \vdash k} \frac{\dim(\lambda)^2}{\sfs_{\lambda}(1^d)} \chi^{\lambda}(\sigma \tau) \\
        &= \frac{1}{(k!)^2} \sum_{\lambda \vdash k} \frac{\dim(\lambda)^2}{\sfs_{\lambda}(1^d)} \sum_{\substack{S \subseteq [d] \\ | S | = k}} \sum_{\bfp : S \to [d]} \left( \prod_{i \in S} b_{\bfp(i)} \right) \\
        &\qquad \sum_{\sigma \in \Sym(S)} \sgn(\sigma) \left( \sum_{\substack{\tau \in \Sym(S) \\ \bfp = \bfp \circ \tau}} \chi^{\lambda}(\sigma \tau) \right) \prod_{i \in S} (a_i \pm a_{\sigma(i)})
    \end{align*}
    hence the claim.
\end{proof}

\begin{rem}
    When $\bfp$ is injective, and for the sake of clarity we take $S = \{ 1,\ldots,k \}$, we have
    \begin{align*}
        &\quad \sum_{\sigma \in \Sym(S)} \sgn(\sigma) \left( \sum_{\substack{\tau \in \Sym(S) \\ \bfp = \bfp \circ \tau}} \chi^{\lambda}(\sigma \tau) \right) \prod_{i \in S} (a_i \pm a_{\sigma(i)}) \\
        &= \sum_{\sigma \in S_k} \sgn(\sigma) \chi^{\lambda}(\sigma) \prod_{i=1}^k (a_i \pm a_{\sigma(i)}) \\
        &= \sum_{\sigma \in S_k} \chi^{\lambda^T}(\sigma) \prod_{i=1}^k (a_i \pm a_{\sigma(i)})
    \end{align*}
    which can be immediately recognized as the immanant $\Imm^{\lambda^T}(\delta_{\pm}(A_S))$.
\end{rem}

To separate the dependence on $A$ from the dependence on $B$, in \cref{lem:StartingPoint}, the sum over $\bfp$ can be processed as follows:
\begin{align*}
    &\quad \frac{1}{(k!)^2} \sum_{\lambda \vdash k} \frac{\dim(\lambda)^2}{\sfs_{\lambda}(1^d)} \sum_{\substack{S \subseteq [d] \\ | S | = k}} \\
    &\qquad \sum_{\bfp : S \to [d]} \sum_{\sigma \in \Sym(S)} \sgn(\sigma) \left( \sum_{\substack{\tau \in \Sym(S) \\ \bfp = \bfp \circ \tau}} \chi^{\lambda}(\sigma \tau) \right) \prod_{i \in S} (a_i \pm a_{\sigma(i)}) \\
    &\qquad  \prod_{i \in S} b_{\bfp(i)} \\
    &= \frac{1}{(k!)^2} \sum_{\substack{\lambda \vdash k \\ \ell(\lambda) \leq d}} \frac{\dim(\lambda)^2}{\sfs_{\lambda}(1^d)} \sum_{\substack{S \subseteq [d] \\ | S | = k}} \\
    &\qquad \sum_{\pi \in P(S)} \left( \sum_{\sigma \in \Sym(S)} \prod_{i \in S} (a_i \pm a_{\sigma(i)}) \left( \sgn(\sigma) \sum_{\substack{\tau \in \Sym(S) \\ \tau \leq \pi}} \chi^{\lambda}(\sigma \tau) \right) \right) \\
    &\qquad \left( \sum_{\substack{\bfp : S \to [d] \\ \ker(\bfp) = \pi}} \prod_{i \in S} b_{\bfp(i)} \right)
\end{align*}
The sum over $\bfp$ in the last line only depends on $\pi$ through the sizes of its blocks:

\begin{lem}
    \label{lem:Breduction}
    Let $\mu \vdash k$ and pick $\pi \in P(k)$ with $t(\pi) = \mu$. Then
    \[ \sum_{\substack{\bfp : [k] \to [d] \\ \ker(\bfp) = \pi}} \prod_{i=1}^k b_{\bfp(i)} = \frac{\ell(\mu)!}{|\Orb(\mu)|} \sfm_{\mu}(B) \]
    where $|\Orb(\mu)|$ is the number of distinct permutations of $\mu$.
\end{lem}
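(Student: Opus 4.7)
The proof is a direct re-parameterization. First, observe that a function $\bfp : [k] \to [d]$ with $\ker(\bfp) = \pi$ is precisely one that is constant on each block of $\pi$ and takes distinct values on distinct blocks. Labeling the blocks of $\pi$ as $B_1,\ldots,B_{\ell(\mu)}$ with $|B_s| = \mu_s$ (which is possible since $t(\pi) = \mu$), giving such a $\bfp$ is equivalent to giving an injection $f : [\ell(\mu)] \hookrightarrow [d]$ via $\bfp(i) = f(s)$ for $i \in B_s$. The product $\prod_{i=1}^k b_{\bfp(i)}$ then simplifies to $\prod_{s=1}^{\ell(\mu)} b_{f(s)}^{\mu_s}$, so the sum on the left-hand side becomes $\sum_f \prod_s b_{f(s)}^{\mu_s}$.

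Next, I split each injection $f$ into its image $T := f([\ell(\mu)])$, a subset of $[d]$ of size $\ell(\mu)$, together with a bijection from $[\ell(\mu)]$ onto $T$. Writing $T = \{t_1 < \cdots < t_{\ell(\mu)}\}$ and recording the bijection as a permutation $\sigma \in S_{\ell(\mu)}$ via $f(s) = t_{\sigma(s)}$, the product becomes $\prod_r b_{t_r}^{\mu_{\sigma^{-1}(r)}}$. For a fixed $T$, two permutations produce the same monomial exactly when they induce the same rearrangement $\alpha \in \Orb(\mu)$ of the parts of $\mu$, and each such $\alpha$ arises from exactly $\ell(\mu)!/|\Orb(\mu)|$ permutations. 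Pulling this factor out yields
$$\frac{\ell(\mu)!}{|\Orb(\mu)|} \sum_{\substack{T \subseteq [d] \\ |T| = \ell(\mu)}} \sum_{\alpha \in \Orb(\mu)} \prod_{r=1}^{\ell(\mu)} b_{t_r}^{\alpha_r}.$$
The remaining double sum is precisely $\sfm_\mu(b_1,\ldots,b_d)$: every distinct monomial of $\sfm_\mu(B)$ is uniquely specified by its support $T \subseteq [d]$ (where the nonzero exponents occur) together with the arrangement $\alpha$ of the nonzero exponents at those positions.

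The argument is purely combinatorial re-indexing with no real obstacle. The only point requiring care is distinguishing permutations of the parts of $\mu$ alone (which contribute the factor $\ell(\mu)!/|\Orb(\mu)|$) from rearrangements of the length-$d$ extended sequence $(\mu_1,\ldots,\mu_{\ell(\mu)},0,\ldots,0)$ (which are captured by the pair $(T,\alpha)$).
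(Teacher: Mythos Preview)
Your argument is correct and follows essentially the same route as the paper: re-parameterize $\bfp$ as an injection from the blocks of $\pi$ into $[d]$, decompose each injection as an image set together with a permutation in $S_{\ell(\mu)}$, and use orbit--stabilizer to count the multiplicity $\ell(\mu)!/|\Orb(\mu)|$. One cosmetic point: you label the blocks $B_1,\ldots,B_{\ell(\mu)}$, which clashes with the matrix $B$ already in play; choose a different letter.
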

\begin{proof}
    If $\pi = \{ V_1,\ldots,V_m \}$, then
    \begin{align*}
        \sum_{\substack{\bfp : [k] \to [d] \\ \ker(\bfp) = \pi}} \prod_{i=1}^k b_{\bfp(i)} &= \sum_{\substack{\bfp : \pi \to [d] \\ \text{injective}}} \prod_{V \in \pi} b_{\bfp(V)}^{| V |} \\
        &= \sum_{\rho \in S_m} \sum_{\substack{\bfp : [m] \to [d] \\ \bfp(\rho(1)) < \cdots < \bfp(\rho(m))}} b_{\bfp(1)}^{| V_1 |} \cdots b_{\bfp(m)}^{| V_m |} \\
        &= \sum_{\rho \in S_m} \sum_{\substack{\bfp : [m] \to [d] \\ \bfp(1) < \cdots < \bfp(m)}} b_{\bfp(1)}^{| V_{\rho(1)} |} \cdots b_{\bfp(m)}^{| V_{\rho(m)} |}
    \end{align*}
    and the number of duplicate summands $b_{\bfp(1)}^{|V_{\rho(1)}|} \cdots b_{\bfp(m)}^{|V_{\rho(m)}|}$ which accumulate, for each $\bfp$, as $\rho$ varies over $S_m$, is the number of permutations in $S_m$ which fix $\mu$. So
    \begin{align*}
        | \Stab(\mu) | \sum_{I \in \Orb(\mu)} \sum_{\substack{\bfp : [m] \to [d] \\ \bfp(1) < \cdots < \bfp(m)}} b_{\bfp(1)}^{I_1} \cdots b_{\bfp(m)}^{I_m} &= \frac{m!}{|\Orb(\mu)|} \sum_{I \in \Orb(\mu)} \sfM_I(B) \\
        &= \frac{\ell(\mu)!}{|\Orb(\mu)|} \sfm_{\mu}(B)
    \end{align*}
    by the orbit-stabilizer theorem.
\end{proof}

\begin{rem}
    The case $\mu=2_k^q$ will be important later: there are $\binom{k-q}{q}$ distinct permutations of
    \[ (\underbrace{\overbrace{2,\ldots,2}^q,1,\ldots,1}_{k-q}) \]
    so the multiple in \cref{lem:Breduction} is $q! (k-2q)!$.
\end{rem}

The above makes
\begin{align}
    &\quad \bE_U \sfe_k(A U B U^* \pm U B U^* A) = \frac{1}{(k!)^2} \sum_{\substack{\lambda \vdash k \\ \ell(\lambda) \leq d}} \frac{\dim(\lambda)^2}{\sfs_{\lambda}(1^d)} \sum_{\substack{S \subseteq [d] \\ | S | = k}} \nonumber \\
    &\qquad \sum_{\mu \vdash k}  \left( \sum_{\sigma \in \Sym(S)} \prod_{i \in S} (a_i \pm a_{\sigma(i)}) \left( \sgn(\sigma) \sum_{\substack{\pi \in P(k) \\ t(\pi) = \mu}} \sum_{\substack{\tau \in \Sym(S) \\ \tau \leq \pi}} \chi^{\lambda}(\sigma \tau) \right) \right) \label{eq:secondline} \\
    &\qquad \frac{\ell(\mu)!}{|\Orb(\mu)|} \sfm_{\mu}(B) \nonumber
\end{align}
and to reach the central point of the argument, one must process the sum
\[ \sgn(\sigma) \sum_{\substack{\pi \in P(k) \\ t(\pi) = \mu}} \sum_{\substack{\tau \in S_k \\ \tau \leq \pi}} \chi^{\lambda}(\sigma \tau) \]
in a way which makes the bracketed portion of \cref{eq:secondline} into a sum of immanants. This will be done in \cref{sec:Young}:

\begin{prop}
    \label{prop:Multiple}
    For $\lambda,\mu \vdash k$, there is a constant $C_{\lambda,\mu}$ such that
    \[ \sum_{\substack{\pi \in P(k) \\ t(\pi) = \mu}} \sum_{\substack{\tau \in S_k \\ \tau \leq \pi}} \chi^{\lambda}(\sigma \tau) = C_{\lambda,\mu} \chi^{\lambda}(\sigma) \text{.} \]
    with the following properties:
    \begin{enumerate}
        \item if $\mu \not\trianglelefteq \lambda$, then $C_{\lambda,\mu} = 0$;
        \item if $\lambda = 2_k^p$ and $\mu = 2_k^q$ with $0 \leq q \leq p \leq \lfloor k/2 \rfloor$, then
            \[ C_{\lambda,\mu} = \frac{p!}{(p-q)!} \binom{k-p+1}{q} \text{.} \]
    \end{enumerate}
\end{prop}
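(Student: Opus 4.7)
The plan is to recognize the key element
$$\eta_\mu \;:=\; \sum_{\substack{\pi \in P(k)\\ t(\pi) = \mu}} \sum_{\tau \in S_\pi} \tau \;\in\; \bC[S_k],$$
where $S_\pi := \prod_{V \in \pi} \Sym(V)$ is the Young subgroup stabilizing the blocks of $\pi$, as a central element of the group algebra. Indeed, conjugation by any $g \in S_k$ maps each pair $(\pi,\tau)$ in the sum to $(g\cdot\pi,\,g\tau g^{-1})$, which is again a valid pair (the new set partition has the same block-size profile $\mu$, and $g\tau g^{-1} \in S_{g\cdot \pi}$), so $g\eta_\mu g^{-1} = \eta_\mu$. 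By Schur's lemma, $\rho^\lambda(\eta_\mu) = C_{\lambda,\mu} \cdot \mathrm{id}_{V^\lambda}$ for a unique scalar; multiplying by $\rho^\lambda(\sigma)$ and taking the trace immediately yields the identity in the proposition.

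To compute $C_{\lambda,\mu}$, I take the trace: $\dim(\lambda)\,C_{\lambda,\mu} = \chi^\lambda(\eta_\mu)$. For each fixed $\pi$, the element $|S_\pi|^{-1}\sum_{\tau\in S_\pi}\rho^\lambda(\tau)$ is the projector onto the $S_\pi$-invariants in $V^\lambda$, so $\sum_{\tau\in S_\pi}\chi^\lambda(\tau) = |S_\pi|\cdot\dim((V^\lambda)^{S_\pi})$. By Frobenius reciprocity this dimension equals the multiplicity of $V^\lambda$ in $\Ind_{S_\pi}^{S_k}\triv$, which is the permutation module $M^\mu$; by Young's rule this multiplicity is the Kostka number $K_{\lambda,\mu}$, depending only on $\mu$. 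Counting the $\pi\in P(k)$ with $t(\pi)=\mu$ as $k!/(|S_\pi|\prod_j m_j(\mu)!)$ and combining, I get
$$C_{\lambda,\mu} \;=\; \frac{k!\,K_{\lambda,\mu}}{\dim(\lambda)\,\prod_j m_j(\mu)!},$$
and property (1) is then the classical statement that $K_{\lambda,\mu}$ vanishes unless $\mu \trianglelefteq \lambda$.

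For property (2), everything reduces to computing $K_{2_k^p,2_k^q}$: the number of semistandard tableaux of shape $(2^p,1^{k-2p})$ with content $(2^q,1^{k-2q})$. In such a tableau, each of the $q$ doubled labels must appear once in each column (by column-strictness), so the free data is which $p-q$ of the $k-2q$ singleton labels populate column~$2$. Encoding this choice as a $\{0,1\}$-word indexed by the singletons in increasing order, the SSYT row inequality $c_i \leq d_i$ translates, after cancelling the balanced contribution of the doubled labels, into a standard ballot/prefix condition, and the reflection principle gives
$$K_{2_k^p,2_k^q} \;=\; \binom{k-2q}{p-q} - \binom{k-2q}{p-q-1} \;=\; \binom{k-2q}{p-q}\cdot\frac{k-2p+1}{k-p-q+1}.$$
Substituting this together with $\dim(2_k^p)$ from \cref{lem:Hooks} and $\prod_j m_j(2_k^q)! = q!(k-2q)!$, and simplifying, yields the stated $\frac{p!}{(p-q)!}\binom{k-p+1}{q}$. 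The main technical point I expect to be the obstacle is verifying carefully that the SSYT row inequality really is captured by the ballot condition on the singleton word; the remainder is bookkeeping.
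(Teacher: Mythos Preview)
Your proposal is correct and follows essentially the same route as the paper: recognize the sum as a central element of $\bC[S_k]$, apply Schur's lemma, and compute the scalar by taking the trace, using that $|S_\pi|^{-1}\sum_{\tau\in S_\pi}\rho^\lambda(\tau)$ projects onto $S_\pi$-invariants together with Young's rule. The paper packages the centrality step as a general finite-group lemma about summing over all conjugates of a fixed subgroup (each $S_\pi$ with $t(\pi)=\mu$ being a conjugate of $S_\mu$), whereas you argue directly that conjugation permutes the pairs $(\pi,\tau)$; these are the same argument.

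The only place you diverge slightly is in evaluating $K_{2_k^p,2_k^q}$. The paper observes that since the doubled labels $1,\ldots,q$ are all \emph{smaller} than the singleton labels $q+1,\ldots,k-q$, the first $q$ rows of any such SSYT are forced to be $(i,i)$; what remains is a standard Young tableau of shape $2_{k-2q}^{p-q}$, and the count drops out of the hook-length formula in \cref{lem:Hooks}. Your ballot/reflection argument is equally valid, and in fact the obstacle you anticipate dissolves via the same observation: because the doubled labels all precede the singletons, they occupy exactly positions $1,\ldots,q$ in both columns, so the row inequalities $c_i\le d_i$ for $i\le q$ are automatic and for $i>q$ become literally the prefix (ballot) condition on the singleton word, giving $\binom{k-2q}{p-q}-\binom{k-2q}{p-q-1}$ by reflection.
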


With \cref{prop:Multiple} in hand, the bracketed portion of \cref{eq:secondline} can be realized as an immanant: it is equal to
\[ \sum_{\sigma \in \Sym(S)} C_{\lambda,\mu} \sgn(\sigma) \chi^{\lambda}(\sigma) \prod_{i \in S} (a_i \pm a_{\sigma(i)}) = C_{\lambda,\mu} \Imm^{\lambda^T}(\delta_{\pm}(A_S))_{i,j \in S} \text{.} \]
This is where it seems prudent to restrict our attention to the commutator: as mentioned in \cref{sec:Immanant}, there is an apparent gap in tractability between the relevant immanants. Recall the computation from \cref{prop:Immanant}:
\begin{align*}
    &\quad \Imm^{\lambda^T}(\delta_-(A_S)) \\
    &= \begin{cases}
        (-1)^p \sum_{l=0}^k (-1)^l (k-l)! l! \sfe_{k-l}(A_S) \sfe_l(A_S) & \text{if } \lambda = 2_k^p \text{ for some } 0 \leq p \leq \lfloor k/2 \rfloor \\
        0 & \text{otherwise}
    \end{cases} \text{.}
\end{align*}
Now, the computation of
\begin{align}
    &\quad \bE_U \sfe_k(A U B U^* - U B U^* A) \nonumber \\
    &= \frac{1}{(k!)^2} \sum_{0 \leq p \leq k/2} \frac{\dim(2_k^p)^2}{\sfs_{2_k^p}(1^d)} \nonumber \\
    &\qquad \sum_{\substack{S \subseteq [d] \\ | S | = k}} \sum_{0 \leq q \leq p} C_{2_k^p,2_k^q} \left( (-1)^p \sum_{l=0}^k (-1)^l (k-l)! l! \sfe_{k-l}(A_S) \sfe_l(A_S) \right) \nonumber \\
    &\qquad q! (k-2q)! \sfm_{2_k^q}(B) \nonumber \\
    &= \left( \sum_{l=0}^k \frac{(-1)^l}{\binom{k}{l}} \sum_{\substack{S \subseteq [d] \\ | S | = k}} \sfe_{k-l}(A_S) \sfe_l(A_S) \right) \label{eq:leftdep} \\
    &\qquad \left( \frac{1}{k!} \sum_{0 \leq p \leq k/2} (-1)^p \frac{\dim(2_k^p)^2}{\sfs_{2_k^p}(1^d)} \sum_{0 \leq q \leq p} C_{2_k^p,2_k^q} q! (k-2q)! \sfm_{2_k^q}(B) \right) \label{eq:rightdep}
\end{align}
amounts to some manipulations of symmetric functions, to be carried out in the following section:

\begin{prop}
    \label{prop:LRdep}
    If $k$ is even, then
    \begin{enumerate}
        \item the expression \emph{(\ref{eq:leftdep})} is equal to
            \[ \frac{(k/2)!}{k!} \sum_{i+j=k} (-1)^i \frac{(d-i)! (d-j)!}{(d-k)! (d-k/2)!} \sfe_i(A) \sfe_j(A) \text{,} \]
            and
        \item the expression \emph{(\ref{eq:rightdep})} is equal to
            \[ k! \frac{d+1-k/2}{(d+1)! d!} \sum_{i+j=k} (-1)^i (d-i)! (d-j)! \sfe_i(B) \sfe_j(B) \text{.} \]
    \end{enumerate}
    If $k$ is odd, then the expression \emph{(\ref{eq:leftdep})} is $0$.
\end{prop}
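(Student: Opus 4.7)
The $k$ odd case of \emph{(\ref{eq:leftdep})} is immediate: the inner sum $\sum_{|S|=k}\sfe_{k-l}(A_S)\sfe_l(A_S)$ is invariant under $l \leftrightarrow k-l$, while the weight $(-1)^l/\binom{k}{l}$ changes sign under this involution when $k$ is odd, so pairing terms $l$ and $k-l$ shows the sum vanishes. For the remainder, assume $k$ is even, and the plan for both parts (1) and (2) is to expand both sides in the basis of monomial symmetric functions $\{\sfm_{2_k^r}\}_{0 \leq r \leq k/2}$ (in $A$ for part (1), in $B$ for part (2)) and match coefficients. The key building blocks expand cleanly: $\sfe_i(A)\sfe_j(A) = \sum_r \binom{k-2r}{i-r}\sfm_{2_k^r}(A)$ for $i+j=k$ (from grouping pairs $(T,T')$ of subsets of $[d]$ with $|T|=i$, $|T'|=j$ by $r=|T\cap T'|$); and since each such pair is contained in exactly $\binom{d-k+r}{r}$ subsets of size $k$, we similarly obtain $\sum_{|S|=k}\sfe_i(A_S)\sfe_j(A_S) = \sum_r \binom{k-2r}{i-r}\binom{d-k+r}{r}\sfm_{2_k^r}(A)$.

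For part (1), matching coefficients of $\sfm_{2_k^r}(A)$ on both sides reduces the problem to the identity (with $K = k-2r$, $D = d-r$, $K$ even)
\[ \sum_{m=0}^K \frac{(-1)^m (D-m)!(D-K+m)!}{m!(K-m)!} = \frac{(D-K)!(D-K/2)!}{(K/2)!} \qquad (\ast) \text{.} \]
I would prove $(\ast)$ via a double Gamma-integral representation: writing each factorial as $\int_0^\infty t^n e^{-t}\,dt$, the LHS becomes
\[ \int_0^\infty\!\!\int_0^\infty (st)^{D-K}(s-t)^K e^{-s-t}\,ds\,dt \text{,} \]
and the change of variables $u=s-t$, $v=s+t$ separates it (using that $(s-t)^K$ is an even function of $u$ when $K$ is even) into standard Beta and Gamma integrals which evaluate to the claimed closed form. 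Alternatively, $(\ast)$ is recognizable as a terminating Kummer-type hypergeometric identity ${}_2F_1(-K, D-K+1; -D; -1)$ with $c = 1+a-b$.

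For part (2), plugging in \cref{lem:Hooks} for $\dim(2_k^p)$ and $\sfs_{2_k^p}(1^d)$, the given formula for $C_{2_k^p,2_k^q}$, and expanding $\sfe_i(B)\sfe_j(B)$ in the monomial basis, coefficient matching of $\sfm_{2_k^q}(B)$ reduces to the identity (with $K=k-2q$, $D=d-q$)
\[ \sum_{n=0}^{K/2}(-1)^n \frac{(K-2n+1)(D-n+1)!(D-K+n)!}{n!(K-n+1)!} = \frac{(D-K/2+1)!(D-K)!}{(K/2)!} \qquad (\ast\ast) \text{.} \]
To derive $(\ast\ast)$ from $(\ast)$, split $(K-2n+1) = (K-n+1) - n$; the first piece gives $\sum_{n=0}^{K/2}(-1)^n \frac{(D-n+1)!(D-K+n)!}{n!(K-n)!}$ and the second, after shifting $n \to n+1$, gives $\sum_{n=0}^{K/2-1}(-1)^n \frac{(D-n)!(D-K+n+1)!}{n!(K-n)!}$. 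Combining over the common range $n=0,\ldots,K/2-1$ collapses the bracket $(D-n+1)+(D-K+n+1) = 2(D-K/2+1)$, leaving $2(D-K/2+1)\sum_{n=0}^{K/2-1}(-1)^n\frac{(D-n)!(D-K+n)!}{n!(K-n)!}$ plus a single boundary term at $n=K/2$ from the first piece. The summand in $(\ast)$ is symmetric under $n \leftrightarrow K-n$ for $K$ even, so the partial sum $\sum_{n=0}^{K/2-1}$ equals half of the full sum $\sum_{n=0}^K$ minus half the middle term at $n=K/2$; substituting $(\ast)$ then causes the middle- and boundary-term contributions to cancel, yielding exactly $\frac{(D-K)!(D-K/2+1)!}{(K/2)!}$.

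The main obstacle is $(\ast)$; everything else is coefficient bookkeeping in the monomial basis and the telescoping reduction of $(\ast\ast)$ to $(\ast)$.
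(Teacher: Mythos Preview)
Your approach is correct and takes a genuinely different route from the paper's.

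For part (1), the paper first rewrites \eqref{eq:leftdep} in the monomial basis via a somewhat elaborate counting argument (introducing ``split chains'' $\C_k(k,l)$ and weak-composition monomial sums $\sfM_I$, \cref{lem:leftdepident,lem:SplitChainsCount}), evaluates the resulting coefficient using Gould's identity (\cref{lem:binom}), then converts the monomial expansion \emph{back} to the elementary basis via the inverse Kostka numbers \eqref{eq:me}, and finally collapses the remaining sum with the Rothe--Hagen identity (\cref{lem:rothehagen}). Your route is tighter: you expand \emph{both} sides in the monomial basis --- your one-line observation that $\sum_{|S|=k}\sfe_i(A_S)\sfe_j(A_S)=\sum_r\binom{k-2r}{i-r}\binom{d-k+r}{r}\sfm_{2_k^r}$ replaces the paper's split-chain machinery entirely --- and then match coefficients directly. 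The paper never isolates a single master identity like your $(\ast)$; it splits the work between two classical binomial identities cited from \cite{G72}. Your Gamma-integral proof of $(\ast)$ is nice, though note that after the change $u=s-t$, $v=s+t$ the integral does not literally separate (the $v$-range depends on $|u|$); you need one more substitution $u=vw$ to factor it, and the resulting Beta integral involves half-integer arguments, so the Legendre duplication formula enters when matching to the factorial form of $(\ast)$. For part (2) the paper is actually more elementary than you: it avoids any hypergeometric-type identity by recognizing the telescope $\binom{k-2q}{p-q}-\binom{k-2q}{p-q-1}$ and the collapse $Q_p(d)+Q_{p+1}(d)=2\frac{d+1-k/2}{(d+1)!d!}(d-p)!(d-k+p)!$, after which \eqref{eq:em} finishes the job in one line --- no analogue of $(\ast\ast)$ is needed. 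What your approach buys is unification: both (1) and (2) become consequences of the single identity $(\ast)$.

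One small imprecision in your outline of (1): matching the $\sfm_{2_k^r}$ coefficient leaves a sum on \emph{each} side, so the reduction is not to a single instance of $(\ast)$ with $D=d-r$. You must also evaluate the $d$-independent sum $\sum_{l}\frac{(-1)^l}{\binom{k}{l}}\binom{k-2r}{l-r}$ coming from the \eqref{eq:leftdep} side, and that is itself an instance of $(\ast)$ --- with $D=k-r$ rather than $D=d-r$. Since you prove $(\ast)$ for arbitrary $D$, this is harmless; just be explicit that two applications are being invoked.
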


\begin{proof}[Proof of \cref{thm:Main}]
    All the pieces are in place by now:
    \begin{align*}
        &\quad \bE_U \sfe_k(A U B U^* - U B U^* A) \\
        &= \left( \frac{(k/2)!}{k!} \sum_{i+j=k} (-1)^i \frac{(d-i)! (d-j)!}{(d-k)! (d-k/2)!} \sfe_i(A) \sfe_j(A) \right) \\
        &\qquad \left( k! \frac{d+1-k/2}{(d+1)! d!} \sum_{i+j=k} (-1)^i (d-i)! (d-j)! \sfe_i(B) \sfe_j(B) \right) \\
        &= (k/2)! \frac{(d-k)!}{(d-k/2)!} \frac{d+1-k/2}{d+1} \left( \sum_{i+j=k} (-1)^i \frac{(d-i)! (d-j)!}{(d-k)! d!} \sfe_i(A) \sfe_j(A) \right) \\
        &\qquad \left( \sum_{i+j=k} (-1)^i \frac{(d-i)! (d-j)!}{(d-k)! d!} \sfe_i(B) \sfe_j(B) \right)
    \end{align*}
    for even $0 \leq k \leq d$. For odd $k$, we have
    \[ \bE_U \sfe_k(A U B U^* - U B U^* A) = 0 \]
    since the expression (\ref{eq:leftdep}) is equal to $0$.
\end{proof}

\section{Transitions between symmetric function bases}
\label{sec:Transition}

This section is dedicated to the proof of \cref{prop:LRdep}. First of all, the claim for odd $k$ is almost trivial: the summands in the expression \ref{eq:leftdep} cancel each other out because
\[ \frac{(-1)^l}{\binom{k}{l}} \sfe_{k-l} \sfe_l = -\frac{(-1)^{k-l}}{\binom{k}{k-l}} \sfe_l \sfe_{k-l} \]
for $0 \leq l \leq k$. So for the rest of this section, $k$ is assumed to be even.

\subsection{Kostka numbers}

\begin{nota}
    The \emph{Kostka numbers}, denoted by $K(\lambda,\mu)$ for $\lambda,\mu \vdash k$, can be defined as the number of semistandard Young tableaux with shape $\lambda$ and weight $\mu$. Of course $K(\lambda,\mu)$ is non-negative, and it is non-zero if and only if $\mu \trianglelefteq \lambda$.
    
    Since the matrix $K := (K(\lambda,\mu))_{\lambda,\mu \vdash k}$ is upper-triangular with $1$s along the diagonal, it is invertible, and $K^{-1}(\lambda,\mu)$ is the $(\lambda,\mu)$-th entry of its inverse. These so-called \emph{inverse Kostka numbers} have a nice combinatorial interpretation \cite{ER90} in terms of Young diagrams.
\end{nota}

Another important interpretation of the Kostka numbers is that they describe transitions between different bases of the symmetric functions, including the elementary and monomial bases. The general principle can be read from e.g. \cite[Section I.6]{M-book}:

\begin{prop}
    We have
    \[ \sfe_{\lambda} = \sum_{\mu \vdash k} \left( \sum_{\nu \vdash k} K(\nu,\lambda) K(\nu^T,\mu) \right) \sfm_{\mu} \]
    and
    \[ \sfm_{\lambda} = \sum_{\mu \vdash k} \left( \sum_{\nu \vdash k} K^{-1}(\lambda,\nu^T) K^{-1}(\mu,\nu) \right) \sfe_{\mu} \]
    for $\lambda \vdash k$.
\end{prop}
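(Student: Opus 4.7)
The plan is to route both identities through the Schur basis, using two standard transition formulas. Firstly, by sorting semistandard tableaux of shape $\nu$ according to their weight, the Schur polynomials expand in the monomial basis as
\[ \sfs_\nu = \sum_{\mu \vdash k} K(\nu,\mu) \sfm_\mu. \]
Secondly, the elementary symmetric functions expand in the Schur basis as
\[ \sfe_\lambda = \sum_{\nu \vdash k} K(\nu,\lambda) \sfs_{\nu^T}. \]
This can be obtained from the ``dual'' formula $\mathsf{h}_\lambda = \sum_\nu K(\nu,\lambda) \sfs_\nu$ (which follows from Pieri's rule, or alternatively from the fact that $\langle \mathsf{h}_\lambda, \sfs_\nu \rangle = K(\nu,\lambda)$ in the Hall inner product) together with the $\omega$-involution on symmetric functions, which sends $\mathsf{h}_\lambda \mapsto \sfe_\lambda$ and $\sfs_\nu \mapsto \sfs_{\nu^T}$.

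With these in hand, the first identity is a direct substitution:
\[ \sfe_\lambda = \sum_\nu K(\nu,\lambda) \sfs_{\nu^T} = \sum_\nu K(\nu,\lambda) \sum_\mu K(\nu^T,\mu) \sfm_\mu = \sum_\mu \left( \sum_\nu K(\nu,\lambda) K(\nu^T,\mu) \right) \sfm_\mu. \]

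For the second identity, I invert both transitions. Since $K = (K(\nu,\mu))_{\nu,\mu}$ is triangular in dominance order with $1$s on the diagonal, it is invertible, and by definition $\sfm_\mu = \sum_\nu K^{-1}(\mu,\nu) \sfs_\nu$. To invert the $\sfe$-to-$\sfs$ transition, I write $T(\nu,\lambda) := K(\nu^T,\lambda)$ for the matrix such that $\sfe_\lambda = \sum_\nu T(\nu,\lambda) \sfs_\nu$; a short check shows $T^{-1}(\rho,\mu) = K^{-1}(\rho,\mu^T)$, because
\[ \sum_{\rho \vdash k} T(\nu,\rho) K^{-1}(\rho,\mu^T) = \sum_{\rho \vdash k} K(\nu^T,\rho) K^{-1}(\rho,\mu^T) = \delta_{\nu^T,\mu^T} = \delta_{\nu,\mu}. \]
Thus $\sfs_\nu = \sum_\lambda K^{-1}(\lambda,\nu^T) \sfe_\lambda$, and substituting into the expansion of $\sfm_\lambda$ in the Schur basis gives
\[ \sfm_\lambda = \sum_\mu \left( \sum_\nu K^{-1}(\lambda,\nu) K^{-1}(\mu,\nu^T) \right) \sfe_\mu, \]
which becomes the claimed formula after reindexing $\nu \mapsto \nu^T$ in the inner sum. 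The only real obstacle in this plan is keeping track of transposes consistently; there is no hidden content beyond standard symmetric function machinery, all of which can be cited from \cite{M-book}.
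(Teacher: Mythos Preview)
Your argument is correct. The paper does not actually supply a proof of this proposition; it simply attributes the result to \cite[Section I.6]{M-book}. Your route through the Schur basis, using $\sfs_\nu = \sum_\mu K(\nu,\mu)\sfm_\mu$ and $\sfe_\lambda = \sum_\nu K(\nu,\lambda)\sfs_{\nu^T}$ (the latter obtained from the $\mathsf{h}$-expansion via the $\omega$-involution), is precisely the standard argument found there, so your proof is fully in line with what the paper is citing.
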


Here is the special case of interest in this paper:

\begin{cor}
    For $0 \leq p \leq k/2$, we have
    \begin{equation} \label{eq:em}
        \sfe_{(k-p,p)} = \sum_{0 \leq q \leq p} \binom{k-2q}{p-q} \sfm_{2_k^q} \text{.}
    \end{equation}
    In the other direction, we have
    \begin{equation} \label{eq:me}
        \sfm_{2_k^q} = (-1)^q \sum_{0 \leq r \leq q} (-1)^r \sfe_{(k-r,r)} \left( \binom{k-q-r}{k-2q} + \binom{k-q-r-1}{k-2q} \right)
    \end{equation}
    for $0 \leq q \leq k/2-1$ and
    \begin{align*}
        \sfm_{2_k^{k/2}} &= \sfe_{(k/2,k/2)} + 2 \cdot (-1)^{k/2} \sum_{0 \leq r \leq k/2-1} (-1)^r \sfe_{(k-r,r)} \\
        &= (-1)^{k/2} \sum_{i+j=k} (-1)^i \sfe_i \sfe_j \text{.}
    \end{align*}
\end{cor}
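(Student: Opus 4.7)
The plan is to derive all three identities from \cref{eq:em} by direct manipulation.

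For \cref{eq:em}, I would expand $\sfe_{(k-p, p)} = \sfe_{k-p} \sfe_p = \left(\sum_{|S|=k-p}\prod_{i \in S} x_i\right)\left(\sum_{|T|=p}\prod_{i \in T} x_i\right)$ and sort the pairs $(S, T)$ by $|S \cap T|$. A monomial of type $2_k^q$ arises exactly from pairs with $|S \cap T| = q$, and the multiplicity $\binom{k-2q}{p-q}$ counts the ways to distribute the $k - 2q$ exponent-one positions between $S \setminus T$ (size $k-p-q$) and $T \setminus S$ (size $p-q$).

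For \cref{eq:me}, the transition matrix $M := \bigl(\binom{k-2q}{p-q}\bigr)_{p, q}$ from \cref{eq:em} is lower triangular with unit diagonal, hence invertible. Writing $N$ for the matrix with $N_{q, r} := (-1)^{q+r}\bigl[\binom{k-q-r}{k-2q} + \binom{k-q-r-1}{k-2q}\bigr]$ (zero for $r > q$), the claim amounts to $N M = I$. The diagonal entries $q = r$ are immediate, and for $q > r$ the identity becomes, after setting $s := q - r$, $M' := k-q-r$, and $j := p - r$,
\[ A + B = 0, \quad\text{where}\quad A := \sum_{j=0}^s (-1)^j \binom{M'+s}{j} \binom{M'-j}{M'-s}, \quad B := \sum_{j=0}^s (-1)^j \binom{M'+s}{j} \binom{M'-j-1}{M'-s}. \]
To prove $A + B = 0$, I would first extend $A$ to a full sum over all $j \geq 0$, interpreting $\binom{M'-j}{M'-s}$ as a polynomial in $j$ via generalized binomials. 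The extended sum vanishes by Chu--Vandermonde: it equals $[y^{M'-s}](1+y)^{M'}(y/(1+y))^{M'+s} = [y^{-2s}](1+y)^{-s} = 0$ for $s \geq 1$. Hence $A$ equals the negative of its ``tail'' over $j \in [M'+1, M'+s]$. Reindexing via $i = j - M'$, applying the binomial symmetry $\binom{M'+s}{M'+i} = \binom{M'+s}{s-i}$ and the negative-upper-index identity $\binom{-n}{k} = (-1)^k \binom{n+k-1}{k}$, and substituting $j = s - i$, one recognizes the tail as precisely $B$. Hence $A = -B$.

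Finally, the $q = k/2$ case is the boundary specialization of \cref{eq:me}: with $k - 2q = 0$, each $\binom{k/2-r}{0} + \binom{k/2-r-1}{0}$ equals $2$ for $r < k/2$ and $1$ for $r = k/2$ (using $\binom{-1}{0} = 0$), giving the first equality. The second equality is elementary: in $(-1)^{k/2}\sum_{i+j=k}(-1)^i \sfe_i \sfe_j$, each pair $(r, k-r)$ with $r < k/2$ contributes $2(-1)^r \sfe_{(k-r, r)}$ after using $(-1)^{k-r} = (-1)^r$ for even $k$, and the middle term $i = k/2$ contributes $\sfe_{(k/2, k/2)}$. The main technical obstacle throughout is the binomial identity $A + B = 0$; the extension-plus-symmetry trick that identifies the tail of $A$ with $B$ is the crucial non-routine step.
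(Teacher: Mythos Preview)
Your approach is sound and genuinely different from the paper's. For \cref{eq:em}, the paper passes through the general transition formula $\sfe_{\lambda}=\sum_{\mu}\bigl(\sum_{\nu}K(\nu,\lambda)K(\nu^T,\mu)\bigr)\sfm_{\mu}$, computes the relevant Kostka numbers $K(2_k^r,2_k^q)$ and $K((k-r,r),(k-p,p))$ from the tableau definition, and then collapses a telescoping sum; you instead read the coefficient of $\sfm_{2_k^q}$ directly from the expansion of $\sfe_{k-p}\sfe_p$ over pairs $(S,T)$, which is shorter and bypasses Kostka numbers entirely. For \cref{eq:me}, the paper quotes the inverse Kostka numbers $K^{-1}(2_k^q,2_k^s)$ and $K^{-1}((k-r,r),(k-s,s))$ from \cite{ER90} and multiplies them through the dual transition formula, whereas you invert the explicit lower-triangular matrix $M$ from \cref{eq:em} by hand, reducing $(NM)_{q,r}=0$ to the identity $A+B=0$ and proving that via a generating-function extension together with a tail-symmetry argument. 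Your route is fully self-contained (no appeal to \cite{ER90}) at the cost of the extra binomial manipulation; the paper's is shorter but outsources the key inversion.

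One genuine gap: your handling of the $q=k/2$ case is not a proof. Saying the first equality is ``the boundary specialization of \cref{eq:me}'' with the convention $\binom{-1}{0}=0$ only explains how the displayed formula \emph{relates} to \cref{eq:me}; it does not establish its correctness, since your $NM=I$ verification was carried out only for $q\le k/2-1$ and in fact relied on the polynomial convention $\binom{n}{0}=1$ for all $n$ (needed when you wrote $\binom{-i}{M'-s}$), which is incompatible with setting $\binom{-1}{0}=0$. You must verify the $q=k/2$ row of the inverse separately. This is easy: for $r<k/2$ and $m:=k/2-r\ge 1$, the required identity $\binom{2m}{m}+2(-1)^m\sum_{j=0}^{m-1}(-1)^j\binom{2m}{j}=0$ follows from $\sum_{j=0}^{2m}(-1)^j\binom{2m}{j}=0$ together with the symmetry $j\leftrightarrow 2m-j$. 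Your argument for the second equality (rewriting as $(-1)^{k/2}\sum_{i+j=k}(-1)^i\sfe_i\sfe_j$) is fine.
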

\begin{proof}
    For (1), recall the interpretation of $K(\lambda,\mu)$ as the number of semistandard tableaux with shape $\lambda$ and weight $\mu$. For $\lambda=2_k^r$ and $\mu=2_k^q$, any such tableau must begin
    \[ \ytableausetup{boxsize=1.3em} \begin{ytableau}
        1 & 1 \\
        2 & 2 \\
        \none[\raisebox{-0.2em}{\vdots}] & \none[\raisebox{-0.2em}{\vdots}] \\
        q & q \\
        \, & \, \\
        \none[\raisebox{-0.2em}{\vdots}] & \none[\raisebox{-0.2em}{\vdots}] \\
        \, & \, \\
        \, \\
        \none[\raisebox{-0.2em}{\vdots}] \\
        \,
    \end{ytableau} \]
    so
    \[ K(2_k^r,2_k^q) = \dim(2_{k-2q}^{r-q}) = \frac{(k-2q)! (k-2p+1)}{(p-q)! (k-p-q+1)!} \]
    by the hook-length formula. On the other hand, a semistandard tableau of shape $(k-r,r)$ with weight $(k-p,p)$ must be of the form
    \[ \ytableausetup{boxsize=1.3em} \begin{ytableau}
        \, & \none[\cdots] & \, & \, & \none[\cdots] & \, & \, & \none[\cdots] & \, \\
        \, & \none[\cdots] & \, & 2 & \none[\cdots] & 2
    \end{ytableau} \]
    since the $2$s cannot go anywhere else if the other boxes are supposed to be filled with $1$s. So $K((k-r,r),(k-p,p)) = 1$ if $r \leq p$, otherwise it is $0$. Now, what remains is to show that
    \begin{equation} \label{eq:binomdiff}
        \sum_{q \leq r \leq p} \frac{(k-2q)! (k-2r+1)}{(r-q)! (k-r-q+1)!} = \binom{k-2q}{p-q} \text{.}
    \end{equation}
    To this end, observe that
    \begin{align*}
        \binom{k-2q}{r-q} - \binom{k-2q}{r-q-1} &= \frac{(k-2q)!}{(r-q)! (k-r-q)!} - \frac{(k-2q)!}{(r-q-1)! (k-r-q+1)!} \\
        &= \frac{(k-2q)! (k-r-q+1) - (k-2q)! (r-q)}{(r-q)! (k-r-q+1)!} \\
        &= \frac{(k-2q)! (k-2r+1)}{(r-q)! (k-r-q+1)!}
    \end{align*}
    so the only summand which is not cancelled out on the left-hand side of \cref{eq:binomdiff} is $\binom{k-2q}{p-q}$.
    
    For (2), one can refer to \cite{ER90} to find that
    \[ K^{-1}(2_k^q,2_k^s) = (-1)^{q-s} \binom{k-q-s}{k-2q} \]
    for $0 \leq s \leq q \leq k/2$, and
    \[ K^{-1}((k-r,r),(k-s,s)) = \begin{cases}
        1 & \text{if } s = r \\
        -1 & \text{if } s = r+1 \\
        0 & \text{otherwise}
    \end{cases} \]
    for $0 \leq r,s \leq k/2$.
\end{proof}

\subsection{Proof of (1) in \texorpdfstring{\cref{prop:LRdep}}{Proposition 4.7}}

The goal of this subsection is to show that
\begin{align*}
    &\quad \sum_{l=0}^k \frac{(-1)^l}{\binom{k}{l}} \sum_{\substack{S \subseteq [d] \\ | S | = k}} \sfe_{k-l}(A_S) \sfe_l(A_S) \\
    &= \frac{(k/2)!}{k!} \sum_{i+j=k} (-1)^i \frac{(d-i)! (d-j)!}{(d-k)! (d-k/2)!} \sfe_i(A) \sfe_j(A)
\end{align*}
when $k$ is even. Observe that
\begin{align*}
    \sum_{\substack{S \subseteq [d] \\ | S | = k}} \sfe_{k-l}(A_S) \sfe_l(A_S) &= \sum_{1 \leq s_1 < \cdots < s_k \leq d} \sum_{\substack{\bfi : [k] \to [k] \\ \bfi(1) < \cdots < \bfi(k-l) \\ \bfi(k-l+1) < \cdots < \bfi(k)}} a_{s_{\bfi(1)}} \cdots a_{s_{\bfi(k)}} \\
    &= \sum_{\substack{\bfi : [k] \to [k] \\ \bfi(1) < \cdots < \bfi(k-l) \\ \bfi(k-l+1) < \cdots < \bfi(k)}} \sum_{1 \leq s_1 < \cdots < s_k \leq d} a_{s_1}^{| \bfi^{-1}(1) |} \cdots a_{s_k}^{| \bfi^{-1}(k) |}
\end{align*}
and at a glance, the polynomial
\[ \sum_{1 \leq s_1 < \cdots < s_k \leq d} a_{s_1}^{| \bfi^{-1}(1) |} \cdots a_{s_k}^{| \bfi^{-1}(k) |} \]
calls to mind the monomial quasisymmetric functions, but
\[ (| \bfi^{-1}(1) |,\ldots,| \bfi^{-1}(k) |) \]
is not a valid index since quasisymmetric functions are supposed to be indexed by ordinary integer compositions, whose entries are all positive, whereas the index here must be allowed to have zero entries.

\begin{nota}
    For $I \in \WComp(m)$ with $\ell(I) = k$, write
    \[ \sfM_I(\bfx) := \sum_{s_1 < \cdots < s_k} x_{s_1}^{I_1} \cdots x_{s_k}^{I_k} \text{.} \]
\end{nota}

The above observation leads to the first piece of the proof:

\begin{lem}
    \label{lem:leftdepident}
    The expression \emph{(\ref{eq:leftdep})} is equal to
    \[ \sum_{q=0}^{k/2} \sfm_{2_k^q}(A) \left( \frac{\binom{d-(k-q)}{q}}{\binom{k-q}{q}} \sum_{l=q}^{k-q} \frac{(-1)^l}{\binom{k}{l}} \binom{k-q}{l} \binom{l}{q} \right) \text{.} \]
\end{lem}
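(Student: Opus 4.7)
The plan is to treat $\sfe_{k-l}(A_S) \sfe_l(A_S)$ combinatorially and swap the order of summation with the outer sum over $S$, rather than going through the two-increasing-subsequence picture suggested in the text just before the lemma. Expand
\[ \sfe_{k-l}(A_S) \sfe_l(A_S) = \sum_{\substack{U, V \subseteq S \\ |U| = k-l, |V| = l}} \prod_{i \in U} a_i \prod_{i \in V} a_i \text{.} \]
For a fixed pair $(U, V) \subseteq [d] \times [d]$ with $|U| = k-l$, $|V| = l$, and $|U \cap V| = q$, one has $|U \cup V| = k - q$, so the number of $k$-element $S \subseteq [d]$ containing $U \cup V$ is $\binom{d-(k-q)}{q}$. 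Swapping the sums over $S$ and $(U,V)$ therefore produces a factor of $\binom{d-(k-q)}{q}$ attached to each such pair.

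Next, classify the pairs by $q = |U \cap V|$ and by the underlying disjoint sets $T_2 := U \cap V$ and $T_1 := U \triangle V$. For each choice of disjoint $T_2, T_1 \subseteq [d]$ with $|T_2| = q$ and $|T_1| = k - 2q$, the number of pairs $(U, V)$ with $U \cap V = T_2$, $U \triangle V = T_1$, $|U| = k-l$, $|V| = l$ equals $\binom{k-2q}{l-q}$ (the number of ways to split $T_1$ into $U \setminus V$ of size $(k-l)-q$ and $V \setminus U$ of size $l-q$). The contribution from each such pair is $\prod_{i \in T_2} a_i^2 \prod_{i \in T_1} a_i$, and summing over all disjoint $(T_1, T_2)$ with the prescribed sizes recovers $\sfm_{2_k^q}(A)$ by the definition of the monomial symmetric functions. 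Combining,
\[ \sum_{l=0}^k \frac{(-1)^l}{\binom{k}{l}} \sum_{\substack{S \subseteq [d] \\ |S|=k}} \sfe_{k-l}(A_S) \sfe_l(A_S) = \sum_{q=0}^{\lfloor k/2 \rfloor} \binom{d-(k-q)}{q} \sfm_{2_k^q}(A) \sum_{l=q}^{k-q} \frac{(-1)^l}{\binom{k}{l}} \binom{k-2q}{l-q} \text{.} \]

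To match the stated form, all that remains is the binomial identity $\binom{k-q}{q} \binom{k-2q}{l-q} = \binom{k-q}{l} \binom{l}{q}$, so that $\binom{k-2q}{l-q} = \binom{k-q}{l}\binom{l}{q} / \binom{k-q}{q}$; this follows instantly by writing both sides as $(k-q)! / (q! (l-q)! (k-q-l)!)$. Substituting, the coefficient of $\sfm_{2_k^q}(A)$ becomes
\[ \frac{\binom{d-(k-q)}{q}}{\binom{k-q}{q}} \sum_{l=q}^{k-q} \frac{(-1)^l}{\binom{k}{l}} \binom{k-q}{l} \binom{l}{q} \text{,} \]
as claimed. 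There is no real obstacle here; the argument is pure bookkeeping, and the only thing to watch is correctly counting the pairs $(U,V)$ with prescribed $U \cap V$ and $U \triangle V$, so that the sum over disjoint $(T_1,T_2)$ assembles into $\sfm_{2_k^q}(A)$ rather than some rescaling of it.
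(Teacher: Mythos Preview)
Your argument is correct. Both the swap of summation over $S$ (giving the $\binom{d-(k-q)}{q}$ factor), the count of pairs $(U,V)$ with prescribed intersection and symmetric difference (giving $\binom{k-2q}{l-q}$), and the final binomial identity are all valid, and the recognition of the sum over disjoint $(T_2,T_1)$ as exactly $\sfm_{2_k^q}(A)$ with no overcount is right on the nose.

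The paper takes a genuinely different route to the same intermediate expression. Rather than working with pairs of subsets $(U,V)$, it parametrizes the product $\sfe_{k-l}(A_S)\sfe_l(A_S)$ by ``split chains'' $\bfi \in \C_k(k,l)$ (maps $[k]\to[k]$ increasing on $[1,k-l]$ and on $[k-l+1,k]$ separately), encodes each $\bfi$ by the weak composition $I(\bfi)=(|\bfi^{-1}(1)|,\ldots,|\bfi^{-1}(k)|)$, and introduces a generalized monomial quasisymmetric function $\sfM_I$ indexed by weak compositions. A separate counting lemma then enumerates $\{\bfi\in\C_k(k,l):I(\bfi)\in\Orb(2^q,1^{k-2q},0^q)\}$ and the orbit size, and a further step shows $\sum_{I\in\Orb(2^q,1^{k-2q},0^q)}\sfM_I(A)=\binom{d-(k-q)}{q}\sfm_{2_k^q}(A)$. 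After all this one lands on the same coefficient $\binom{k-2q}{l-q}$ you obtain directly, and applies the same binomial identity. Your subset formulation is more elementary and avoids both the split-chain bookkeeping and the quasisymmetric-function notation; the paper's approach sets up a more general framework (weak-composition-indexed $\sfM_I$) that is not actually reused later, so your shortcut costs nothing here.
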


To prove this, let us set up some more notation:

\begin{nota}
    \label{nota:SplitChains}
    Write
    \[ \C_m(k,l) := \{ \bfi : [k] \to [m] : \bfi(1) < \cdots < \bfi(k-l) \text{ and } \bfi(k-l+1) < \cdots < \bfi(k) \} \]
    for $0 \leq l \leq k$. For $\bfi \in \C_m(k,l)$, define a weak composition $I(\bfi) \in \WComp(m)$ by $I(\bfi) := (| \bfi^{-1}(1) |,\ldots,| \bfi^{-1}(k) |)$.
\end{nota}

\begin{lem}
    \label{lem:SplitChainsCount}
    Let $0 \leq q \leq \frac{k}{2}$. Then
    \begin{enumerate}
        \item $| \Orb(2^q,1^{k-2q},0^q) | = \binom{k}{q} \binom{k-q}{q}$;
        \item for $0 \leq l \leq k$, we have
            \begin{align*}
                &\quad | \{ \bfi \in \C_k(k,l) : I(\bfi) \in \Orb(2^q,1^{k-2q},0^q) \} | \\
                &= \begin{cases}
                    \binom{k}{l} \binom{k-l}{q} \binom{l}{q} & \text{if } q \leq l \leq k-q \\
                    0 & \text{otherwise}
                \end{cases} \text{.}
            \end{align*}
    \end{enumerate}
\end{lem}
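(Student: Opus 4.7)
The proof is purely combinatorial, via direct enumeration in each part; there is no substantial obstacle, only bookkeeping.

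For (1), I would observe that $\Orb(2^q, 1^{k-2q}, 0^q)$ is, by definition, the set of distinct sequences obtained by permuting the multiset $\{2^q, 1^{k-2q}, 0^q\}$. Since the total length is $q + (k-2q) + q = k$, standard multinomial counting gives
\[ | \Orb(2^q, 1^{k-2q}, 0^q) | = \frac{k!}{q!\,(k-2q)!\,q!} = \binom{k}{q} \binom{k-q}{q} \text{,} \]
which is (1).

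For (2), the plan is to set up a bijection between the $\bfi$ in question and triples $(D, S_1, S_2)$ of pairwise disjoint subsets of $[k]$ with prescribed sizes. An element $\bfi \in \C_k(k,l)$ is determined by two strictly increasing sequences in $[k]$ of lengths $k-l$ and $l$, so one can encode $\bfi$ by the pair of sets $T_1 := \bfi(\{1, \ldots, k-l\})$ and $T_2 := \bfi(\{k-l+1, \ldots, k\})$. The condition $I(\bfi) \in \Orb(2^q, 1^{k-2q}, 0^q)$ is the condition that each value in $[k]$ appears in $T_1 \cup T_2$ with multiplicity either $0$, $1$, or $2$, and that the number of doubled values is exactly $q$, the number of singly-appearing values is $k-2q$, and the number of missing values is $q$. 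So setting $D = T_1 \cap T_2$, $S_1 = T_1 \setminus T_2$, $S_2 = T_2 \setminus T_1$, these are disjoint subsets of $[k]$ of sizes $q$, $k-l-q$, $l-q$ respectively. Non-negativity of these sizes forces $q \leq l \leq k-q$; if this fails, the count is zero.

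Conversely, given any three pairwise disjoint subsets $D, S_1, S_2 \subseteq [k]$ with these sizes, there is a unique $\bfi \in \C_k(k,l)$ recovering them, namely the one whose first increasing sequence lists $D \cup S_1$ in order and whose second increasing sequence lists $D \cup S_2$ in order. Thus when $q \leq l \leq k-q$, the count is
\[ \binom{k}{q,\,k-l-q,\,l-q,\,q} = \frac{k!}{q!\,(k-l-q)!\,(l-q)!\,q!} \text{,} \]
and a short manipulation rewrites this as $\binom{k}{l} \binom{k-l}{q} \binom{l}{q}$, giving (2).
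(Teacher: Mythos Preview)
Your proof is correct and follows essentially the same direct-enumeration strategy as the paper: in both cases one records $\bfi$ by the images $T_1,T_2$ of its two increasing pieces and counts according to how these overlap. Your packaging via the triple $(D,S_1,S_2)=(T_1\cap T_2,\,T_1\setminus T_2,\,T_2\setminus T_1)$ and the multinomial coefficient is a bit more explicit than the paper's three-step construction, but the underlying argument is the same.
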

\begin{proof}
    For (1), the distinct permutations of
    \[ (\underbrace{2,\ldots,2}_q,\underbrace{1,\ldots,1}_{k-2q},\underbrace{0,\ldots,0}_q) \]
    are determined by placing $q$ $2$s in $k$ available entries, then placing $q$ $0$s in the remaining $k-q$ available entries; the $k-2q$ $1$s are then forced into the remaining $k-2q$ entries. There are of course $\binom{k}{q} \binom{k-q}{q}$ ways of doing this.
    
    For (2), to build a $\bfi$ with $I(\bfi)$ a permutation of $(2^q,1^{k-2q},0^q)$, one may proceed as follows:
    \begin{itemize}
        \item start with a chain $\bfi(1) < \cdots < \bfi(k-l)$;
        \item choose $q$ values of the above, which will be duplicated;
        \item choose which of $\bfi(k-l+1),\ldots,\bfi(k)$ will be used for the duplication.
    \end{itemize}
    There are $\binom{k}{l}$ choices for the first, $\binom{k-l}{q}$ choices for the second, and $\binom{l}{q}$ choices for the third, hence the claim.
\end{proof}

\begin{proof}[Proof of \cref{lem:leftdepident}]
    With \cref{nota:SplitChains}, the expression (\ref{eq:leftdep}) is equal to
    \begin{align}
        &\quad \sum_{l=0}^k \frac{(-1)^l}{\binom{k}{l}} \sum_{\bfi \in \C_k(k,l)} \sfM_{I(\bfi)}(A) \nonumber \\
        &= \sum_{l=0}^k \frac{(-1)^l}{\binom{k}{l}} \sum_{q=0}^{k/2} \sum_{I \in \Orb(2^q,1^{k-2q},0^q)} | \{ \bfi \in \C_k(k,l) : I(\bfi) = I \} | \sfM_I(A) \nonumber \\
        &= \sum_{q=0}^{k/2} \sum_{I \in \Orb(2^q,1^{k-2q},0^q)} \left( \sum_{l=0}^k \frac{(-1)^l}{\binom{k}{l}} | \{ \bfi \in \C_k(k,l) : I(\bfi)=I \} | \right) \sfM_I(A) \label{eq:Ccard} \\
        &= \sum_{q=0}^{k/2} \sum_{I \in \Orb(2^q,1^{k-2q},0^q)} \sfM_I(A) \nonumber \\
        &\qquad \left( \frac{1}{\binom{k}{q} \binom{k-q}{q}} \sum_{l=q}^{k-q} \frac{(-1)^l}{\binom{k}{l}} \binom{k}{l} \binom{k-l}{q} \binom{l}{q} \right) \nonumber \tag{\cref{lem:SplitChainsCount}} \text{.}
    \end{align}
    It is easy to see that
    \[ \frac{1}{\binom{k}{q} \binom{k-q}{q}} \sum_{l=q}^{k-q} \frac{(-1)^l}{\binom{k}{l}} \binom{k}{l} \binom{k-l}{q} \binom{l}{q} = \frac{1}{\binom{k-q}{q}} \sum_{l=q}^{k-q} \frac{(-1)^l}{\binom{k}{l}} \binom{k-q}{l} \binom{l}{q} \]
    by pushing around some factorials, so the remaining task is to show that
    \[ \sum_{I \in \Orb(2^q,1^{k-2q},0^q)} \sfM_I(A) = \binom{d-(k-q)}{q} \sfm_{2_k^q}(A) \text{.} \]
    To this end, recall the definition
    \[ \sfm_{2_k^q}(A) = \sum_{J \in \Orb(2^q,1^{k-2q},0^{d-(k-q)})} a_1^{J_1} \cdots a_d^{J_d} \text{,} \]
    i.e. we add zeros to $2_k^q$ as ``padding" in case its length is less than $d$. On the other hand, we can write
    \begin{align*}
        &\quad \sum_{I \in \Orb(2^q,1^{k-2q},0^q)} \sfM_I(A) \\
        &= \sum_{I \in \Orb(2^q,1^{k-2q},0^q)} \sum_{1 \leq s_1 < \cdots < s_k \leq d} a_{s_1}^{I_1} \cdots a_{s_k}^{I_k} \\
        &= \sum_{I \in \Orb(2^q,1^{k-2q},0^q)} \sum_{1 \leq s_1 < \cdots < s_k \leq d} a_1^0 \cdots a_{s_1-1}^0 a_{s_1}^{I_1} a_{s_1+1}^0 \cdots a_{s_k-1}^0 a_{s_k}^{I_k} a_{s_k+1}^0 \cdots a_d^0
    \end{align*}
    so each summand is of the form $a_1^{J_1} \cdots a_d^{J_d}$ with
    \[ J = (0,\ldots,0,\underbrace{I_1}_{s_1},0,\ldots,0,\underbrace{I_k}_{s_k},0,\ldots,0) \in \Orb(2^q,1^{k-2q},0^{d-(k-q)}) \text{.} \]
    Every $J \in \Orb(2^q,1^{k-2q},0^{d-(k-q)})$ arises as such, in $\binom{d-(k-q)}{q}$ ways, since an element of the preimage is the same as a choice of $q$ $0$s to keep from the $d-(k-q)$ $0$s in $J$.
\end{proof}

For the remainder of the proof, we will require two identities of binomial coefficients, which can be found in e.g. \cite{G72}. In these identities, $y$ is a formal variable.

\begin{lem}[{\cite[4.8]{G72}}]
    \label{lem:binom}
    We have
    \[ \sum_{s=0}^{2n} (-1)^s \frac{\binom{2n}{s}}{\binom{2n+2y}{s+y}} = \frac{\binom{2n}{n}}{\binom{y+n}{n} \binom{2y+2n}{y+n}} \]
    for $n \geq 1$.
\end{lem}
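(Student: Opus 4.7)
The plan is to treat both sides as rational functions of the formal variable $y$ and reduce the identity to a routine Beta-integral computation, establishing it first for $y > -1$ and then extending by analytic continuation. First I would replace the reciprocal binomial in the summand using
\[ \frac{1}{\binom{N}{k}} = (N+1) \int_0^1 t^k (1-t)^{N-k} \, dt \]
with $N = 2n+2y$ and $k = s+y$. After interchanging the sum with the integral and pulling out $t^y (1-t)^{y+2n}$, the remaining factor is
\[ \sum_{s=0}^{2n} \binom{2n}{s} \left( -\frac{t}{1-t} \right)^s = \left( \frac{1-2t}{1-t} \right)^{2n} \]
by the binomial theorem, and the $(1-t)^{2n}$ in the denominator cancels the corresponding factor outside, so the left-hand side is
\[ (2n+2y+1) \int_0^1 t^y (1-t)^y (1-2t)^{2n} \, dt. \]

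The substitution $u = 1-2t$ symmetrizes the integrand, and by parity of $u^{2n}(1-u^2)^y$ the integral over $[-1,1]$ doubles the integral over $[0,1]$. A further substitution $v = u^2$ recognizes what remains as the Beta integral $B(n+1/2,\, y+1)$, so using $2n+2y+1 = 2(n+y+1/2)$ together with $\Gamma(n+y+3/2) = (n+y+1/2)\,\Gamma(n+y+1/2)$, the left-hand side simplifies to
\[ \mathrm{LHS} = \frac{\Gamma(n+1/2)\, \Gamma(y+1)}{2^{2y}\, \Gamma(n+y+1/2)}. \]

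To match this with the claimed right-hand side, I would rewrite each of the three binomial coefficients appearing there in terms of Gamma functions and then apply the Legendre duplication formula $\Gamma(2z) = \pi^{-1/2} 2^{2z-1} \Gamma(z)\Gamma(z+1/2)$ with $z = n+1/2$ (to rewrite $(2n)!$) and with $z = y+n+1/2$ (to rewrite $\Gamma(2y+2n+1)$). The two $\sqrt{\pi}$ factors cancel, and the resulting powers of $2$ combine to leave precisely the factor $2^{2y}$ in the denominator, so everything matches. The conceptual content is just a Beta-function computation; the main obstacle is purely bookkeeping in this final step, keeping track of the half-integer Gamma values and the competing powers of $2$ produced by the two applications of the duplication formula.
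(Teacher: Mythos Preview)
Your argument is correct. The Beta-integral representation of the reciprocal binomial, the binomial-theorem collapse of the sum, the symmetrizing substitution $u=1-2t$ followed by $v=u^2$, and the two applications of Legendre duplication all go through as you describe; the final Gamma expression $\Gamma(n+\tfrac12)\,\Gamma(y+1)\,/\,\bigl(2^{2y}\,\Gamma(n+y+\tfrac12)\bigr)$ matches the right-hand side exactly after the bookkeeping you outline. The restriction $y>-1$ for convergence of the integrals, followed by analytic continuation (both sides being rational in $y$ once the Gamma-function definition of $\binom{2n+2y}{s+y}$ is adopted), is the right way to handle the formal variable.

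Note, however, that the paper does not actually prove this lemma: it is simply quoted from Gould's compendium \cite{G72} as identity~4.8 and used as a black box in the proof of \cref{prop:LRdep}. So there is no ``paper's proof'' to compare against; your Beta-integral computation supplies a self-contained analytic proof where the paper relies on a citation.
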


\begin{lem}[Rothe-Hagen identity {\cite[3.146]{G72}}]
    \label{lem:rothehagen}
    We have
    \[ \sum_{s=0}^n \frac{n}{n+s} \binom{n+s}{s} \binom{y-s}{n-s} = \binom{n+y}{n} \]
    for $n \geq 1$.
\end{lem}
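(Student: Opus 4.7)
The plan is to prove the Rothe-Hagen identity by coefficient extraction in formal power series (Egorychev's method). First I would simplify the weighted binomial coefficient by observing
\[ \frac{n}{n+s}\binom{n+s}{s} = \frac{(n+s-1)!}{s!\,(n-1)!} = \binom{n+s-1}{s}, \]
which reduces the claim to the cleaner-looking identity
\[ \sum_{s=0}^n \binom{n+s-1}{s}\binom{y-s}{n-s} = \binom{n+y}{n}. \]

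Next I would interpret $\binom{y-s}{n-s}$ as $[w^{n-s}](1+w)^{y-s}$, where $(1+w)^{y-s}$ denotes the formal binomial series $\sum_{j\geq 0}\binom{y-s}{j}w^j$ (treating $y$ as an indeterminate, which is legitimate since both sides of the claimed identity are polynomials in $y$ of degree $n$). Since $\binom{y-s}{n-s}$ vanishes for $s > n$ by the usual convention on binomial coefficients with negative lower index, the sum can be extended to all $s \geq 0$, giving
\[ \sum_{s\geq 0}\binom{n+s-1}{s}\binom{y-s}{n-s} = [w^n]\sum_{s\geq 0}\binom{n+s-1}{s} w^s(1+w)^{y-s} = [w^n](1+w)^y\sum_{s\geq 0}\binom{n+s-1}{s}\left(\frac{w}{1+w}\right)^s. \]
The inner sum is exactly the generalized binomial expansion
\[ \left(1-\frac{w}{1+w}\right)^{-n} = (1+w)^n, \]
so the whole expression telescopes to $[w^n](1+w)^{n+y} = \binom{n+y}{n}$, which is the right-hand side.

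The main technical point to justify is the formal power series substitution $z \mapsto w/(1+w)$ into the series $(1-z)^{-n} = \sum_{s\geq 0}\binom{n+s-1}{s}z^s$. This is valid because $w/(1+w)$ has no constant term as a formal power series in $w$, so the composition is a well-defined formal power series, and only finitely many values of $s$ contribute to the coefficient of any fixed power of $w$; thus the reindexing and summation-interchange are all legitimate at the level of formal series. Everything else is bookkeeping, and this is essentially the standard Egorychev derivation of Rothe's identity.
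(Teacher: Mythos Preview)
Your proof is correct. The simplification $\frac{n}{n+s}\binom{n+s}{s} = \binom{n+s-1}{s}$ is right, the coefficient-extraction manipulation is valid (as you note, the substitution $z = w/(1+w)$ is a formal power series with zero constant term, and only finitely many $s$ contribute to any fixed $[w^n]$), and the use of $(1+w)^{y-s} = (1+w)^y(1+w)^{-s}$ is justified by the Vandermonde identity at the level of formal binomial series with $y$ an indeterminate.

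As for comparison with the paper: there is nothing to compare. The paper does not prove this lemma at all; it simply quotes it from Gould's tables \cite[3.146]{G72} as a known identity. Your Egorychev-style argument therefore supplies a self-contained proof where the paper is content to cite the literature. This is a net gain for a reader who does not have Gould to hand, and the approach you chose is both standard and clean for identities of this shape.
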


The right-hand side of \cref{lem:leftdepident} can be re-arranged as
\begin{align*}
    &\quad \sum_{q=0}^{k/2} \sfm_{2_k^q} \left( \frac{\binom{d-(k-q)}{q}}{\binom{k-q}{q}} \sum_{l=q}^{k-q} \frac{(-1)^l}{\binom{k}{l}} \binom{k-q}{l} \binom{l}{q} \right) \\
    &= \sum_{0 \leq q \leq k/2} \sfm_{2_k^q} \binom{d-k+q}{q} \left( \sum_{q \leq l \leq k-q} (-1)^l \frac{\binom{k-2q}{l-q}}{\binom{k}{l}} \right) \\
    &= \sum_{0 \leq q \leq k/2} \sfm_{2_k^q} \binom{d-k+q}{q} \left( (-1)^q \frac{\binom{k-2q}{k/2-q}}{\binom{k/2}{q} \binom{k}{k/2}} \right) \tag{\cref{lem:binom}} \\
    &= \frac{(k/2)!}{k!} \sum_{0 \leq q \leq k/2} (-1)^q \sfm_{2_k^q} (d-k+q)_q \frac{(k-2q)!}{(k/2-q)!}
\end{align*}
and one can apply \cref{eq:me}: the above is equal to
\begin{align}
    &\quad \frac{(k/2)!}{k!} \sum_{0 \leq r \leq k/2-1} (-1)^r \sfe_{(k-r,r)} \frac{(d-k+r)!}{(d-k)!} \nonumber \\
    &\qquad \sum_{r \leq q \leq k/2-1} (d-k+q) \cdots (d-k+r+1) \frac{(k-2r) (k-q-r-1)!}{(q-r)! (k/2-q)!} \nonumber \\
    &\quad + \frac{(k/2)!}{k!} (d-k/2)_{k/2} \sum_{i+j=k} (-1)^i \sfe_i \sfe_j \nonumber \\
    &= \frac{(k/2)!}{k!} \sum_{0 \leq r \leq k/2-1} (-1)^r \sfe_{(k-r,r)} \frac{(d-k+r)!}{(d-k)!} \nonumber \\
    &\qquad \sum_{r \leq q \leq k/2-1} \binom{d-k+q}{q-r} \frac{(k-2r) (k-q-r-1)!}{(k/2-q)!} \label{eq:prerothehagen} \\
    &\quad + \frac{(k/2)!}{k!} \frac{(d-k/2)!}{(d-k)!} \sum_{i+j=k} (-1)^i \sfe_i \sfe_j \text{.} \nonumber
\end{align}
By \cref{lem:rothehagen} with $n=k/2-r$ and $y = d-k/2$, the expression (\ref{eq:prerothehagen}) is equal to
\[ 2 \frac{(d-r)!}{(d-k/2)!} - 2 \frac{(d-k/2)!}{(d-k+r)!} \]
so the expression (\ref{eq:leftdep}) is equal to
\begin{align*}
    &\quad \frac{(k/2)!}{k!} \left( 2 \sum_{0 \leq r \leq k/2-1} (-1)^r \sfe_{(k-r,r)} \frac{(d-r)! (d-k+r)!}{(d-k)! (d-k/2)!} \right. \\
    &\qquad \left. - 2 \frac{(d-k/2)!}{(d-k)!} \sum_{0 \leq r \leq k/2-1} (-1)^r \sfe_{(k-r,r)} + \frac{(d-k/2)!}{(d-k)!} \sum_{i+j=k}(-1)^i \sfe_i \sfe_j \right) \\
    &= \frac{(k/2)!}{k!} \left( 2 \sum_{0 \leq r \leq k/2-1} (-1)^r \sfe_{(k-r,r)} \frac{(d-r)! (d-k+r)!}{(d-k)! (d-k/2)!} \right. \\
    &\qquad \left. + (-1)^{k/2} \frac{(d-k/2)!}{(d-k)!} \sfe_{(k/2,k/2)} \right) \\
    &= \frac{(k/2)!}{k!} \sum_{i+j=k} (-1)^i \frac{(d-i)! (d-j)!}{(d-k)! (d-k/2)!} \sfe_i \sfe_j
\end{align*}
hence the claim of (1) in \cref{prop:LRdep}.

\subsection{Proof of (2) in \texorpdfstring{\cref{prop:LRdep}}{Proposition 4.7}}

The remaining part of \cref{prop:LRdep} is the basis transition
\begin{align*}
    &\quad \frac{1}{k!} \sum_{0 \leq p \leq k/2} (-1)^p \frac{\dim(2_k^p)^2}{\sfs_{2_k^p}(1^d)} \sum_{0 \leq q \leq p} C_{2_k^p,2_k^q} q! (k-2q)! \sfm_{2_k^q}(B) \\
    &= k! \frac{d+1-k/2}{(d+1)! d!} \sum_{i+j=k} (-1)^i (d-i)! (d-j)! \sfe_i(B) \sfe_j(B)
\end{align*}
which is much more straightforward to prove than the previous one. The left-hand side is
\begin{align}
    &\quad \frac{1}{k!} \sum_{0 \leq q \leq p \leq k/2} (-1)^p \frac{\dim(2_k^p)^2}{\sfs_{2_k^p}(1^d)} \frac{p!}{(p-q)!} \binom{k-p+1}{q} q! (k-2q)! \sfm_{2_k^q} \tag{\cref{prop:Multiple}} \nonumber \\
    &= k! \sum_{0 \leq q \leq p \leq k/2} (-1)^p \frac{(d+1-p)! (d-(k-p))!}{(d+1)! d!} \frac{(k-2q)! (k-2p+1)}{(p-q)! (k-p-q+1)!} \sfm_{2_k^q} \tag{\cref{lem:Hooks}} \nonumber \\
    &= k! \sum_{0 \leq q \leq k/2} \sfm_{2_k^q} \nonumber \\
    &\qquad \sum_{q \leq p \leq k/2} (-1)^p \frac{(d+1-p)! (d-(k-p))!}{(d+1)! d!} \left( \binom{k-2q}{p-q} - \binom{k-2q}{p-q-1} \right) \label{eq:psum}
\end{align}
and with $Q_p(d) := \frac{(d+1-p)! (d-(k-p))!}{(d+1)! d!}$, the expression (\ref{eq:psum}) is equal to
\[ \sum_{q \leq p \leq k/2} (-1)^p \binom{k-2q}{p-q} (Q_p(d) + Q_{p+1}(d)) \]
where for the sake of notation we say $Q_{k/2+1}(d) = 0$. Then, for $q \leq p \leq k/2-1$, we have
\[ Q_p(d) + Q_{p+1}(d) = 2 \frac{d+1-k/2}{(d+1)! d!} (d-p)! (d-(k-p))! \]
and
\[ Q_{k/2}(d) = \frac{(d+1-k/2)! (d-k/2)!}{(d+1)! d!} = \frac{d+1-k/2}{(d+1)! d!} (d-k/2)! (d-k/2)! \text{.} \]
Putting this back into (\ref{eq:psum}), we get
\begin{align*}
    &\quad k! \sum_{0 \leq q \leq k/2} \sfm_{2_k^q} \sum_{q \leq p \leq k/2} (-1)^p \binom{k-2q}{p-q} (Q_p(d) + Q_{p+1}(d)) \\
    &= k! \frac{d+1-k/2}{(d+1)! d!} \sum_{0 \leq p \leq k/2} (-1)^p (d-p)! (d-(k-p))! \sum_{0 \leq q \leq p} \binom{k-2q}{p-q} \sfm_{2_k^q} \\
    &= k! \frac{d+1-k/2}{(d+1)! d!} \sum_{i+j=k} (-1)^i (d-i)! (d-j)! \sfe_i \sfe_j \tag{\cref{eq:em}}
\end{align*}
hence the claim of (2) in \cref{prop:LRdep}.

\appendix

\section{Young subgroups and permutation modules}
\label{sec:Young}

In this section we will prove \cref{prop:Multiple}, which amounts to the following:
\begin{propp}{prop:Multiple}
    Let $\lambda,\mu \vdash k$. Then
    \[ \sum_{\substack{\pi \in P(k) \\ t(\pi) = \mu}} \sum_{\substack{\tau \in S_k \\ \tau \leq \pi}} \rho^{\lambda}(\tau) = \frac{p_{\mu} | S_{\mu} | K(\lambda,\mu)}{\dim(\lambda)} \cdot 1 \]
    where $p_{\mu}$ is the number of partitions $\pi \in P(k)$ with $t(\pi) = \mu$.
\end{propp}

The idea is that the sum is averaging over each conjugate of $S_{\mu}$, and then adding up all the conjugate-subgroup-sums, yielding a central element of $\bC[S_k]$. Schur's lemma gives the scalar multiples, and then one can compute them as needed using the specifics of $S_k$.

An important part of the argument is clarified by working with finite groups in general:

\begin{nota}
    Fix a finite group $G$ and a subgroup $H$, and let
    \[ \{ g_j H g_j^{-1} : 1 \leq j \leq n \} \]
    be the distinct conjugates of $H$, writing $H_j := g_j H g_j^{-1}$. Fix a representation $\rho : G \to \GL(V)$ and write $\Res_H^G(\rho) = \bigoplus_{i=1}^m \rho_i$ where $\rho_1,\ldots,\rho_m$ are irreducible representations of $H$.
\end{nota}

The first general fact is that the restriction functor is invariant, up to natural isomorphism, under conjugation of subgroups:

\begin{lem}
    For $g \in G$, there is an isomorphism $\eta_g : \Res_H^G(\rho) \simeq \Res_{gHg^{-1}}^G(\rho)$.
\end{lem}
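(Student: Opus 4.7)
The plan is to exhibit a concrete intertwiner and check it has the intertwining property; the natural candidate is simply $\eta_g := \rho(g)$ itself, viewed as a linear automorphism of $V$.

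First I would verify that $\eta_g = \rho(g)$ intertwines the two restrictions. For any $h \in H$ and $v \in V$, we have
\[ \eta_g\bigl(\Res_H^G(\rho)(h)\,v\bigr) = \rho(g)\rho(h)v = \rho(gh)v = \rho(ghg^{-1})\rho(g)v = \Res_{gHg^{-1}}^G(\rho)(ghg^{-1})\,\eta_g(v)\text{,} \]
and since every element of $gHg^{-1}$ has the form $ghg^{-1}$ for a unique $h \in H$, this is exactly the required intertwining property for the isomorphism of $H$-representations obtained by transporting along the group isomorphism $H \to gHg^{-1}$, $h \mapsto ghg^{-1}$.

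Then I would note that $\eta_g = \rho(g)$ is a linear isomorphism of $V$, because $\rho(g)$ has inverse $\rho(g^{-1})$. So $\eta_g$ is in particular a bijective intertwiner, hence an isomorphism of representations.

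The only potentially subtle point is the meaning of the isomorphism: the two sides live on the same underlying vector space $V$ but are representations of different groups $H$ and $gHg^{-1}$, so the comparison is via the canonical identification $h \leftrightarrow ghg^{-1}$. There is no real obstacle here — this is essentially a tautological computation — but care in formulating what ``$\simeq$'' means is the main thing to get right, and it will be used later when the $\eta_g$'s are assembled to transport the pieces $\rho_i$ of $\Res_H^G(\rho)$ onto the conjugate subgroups $H_j$.
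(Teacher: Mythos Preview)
Your proof is correct and is essentially the same as the paper's: both take $\eta_g$ to be (left) multiplication by $\rho(g)$ on $V$ and verify the intertwining relation $\rho(ghg^{-1})\,\eta_g = \eta_g\,\rho(h)$ by a one-line computation using the homomorphism property of $\rho$. The paper's write-up carries an extra right factor $\rho(g^{-1})$ in its formula for $\eta_g$, but the underlying idea and the verification are identical to yours.
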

\begin{proof}
    Fix $g \in G$ and let $\rho : G \to \GL(V)$ be a representation of $G$. Define $\eta_g(\rho) : V \to V$ by $\eta_g(\rho) v = \rho(g) v \rho(g^{-1})$ for $v \in V$, which is an isomorphism of vector spaces. Moreover, for $h \in H$,
    \begin{align*}
        \rho(g h g^{-1}) \eta_g(\rho) v &= \rho(g h g^{-1}) \rho(g) v \rho(g^{-1}) \\
        &= \rho(g h) v \rho(g^{-1}) \\
        &= \rho(g) \rho(h) v \rho(g^{-1}) \\
        &= \eta_g(\rho) \rho(h) v
    \end{align*}
    for $v \in V$, so $\eta_g(\rho)$ intertwines $\Res_H^G(\rho)$ and $\Res_{g H g^{-1}}(\rho)$.
\end{proof}

The second general fact is that averaging a representation over a subgroup yields a projection which encodes the occurrence of the trivial representation in the restriction:

\begin{lem}
    \label{lem:ResMult}
    In the block-matrix decomposition with respect to $\bigoplus_{i=1}^m V_i$,
    \[ \frac{1}{| H |} \sum_{h \in H} \rho(h) = \begin{pmatrix}
        \delta_{\rho_1=\triv} \cdot 1 & \multicolumn{2}{c}{\raisebox{-1ex}{\large $0$}} \\
        & \ddots & \\
        \multicolumn{2}{c}{\raisebox{+1ex}{\large $0$}} & \delta_{\rho_m=\triv} \cdot 1
    \end{pmatrix} \text{.} \]
\end{lem}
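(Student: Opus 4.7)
The plan is to recognize $P := \frac{1}{|H|} \sum_{h \in H} \rho(h)$ as the standard averaging operator onto $H$-invariants, and then read off its block structure using irreducibility and Schur's lemma.

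First I would note that each subrepresentation $V_i \subseteq V$ is, by assumption, $H$-invariant under $\rho$, so $P$ preserves each $V_i$. Consequently $P$ is block-diagonal with respect to the decomposition $V = \bigoplus_{i=1}^m V_i$, and all the claimed off-diagonal zeros come for free. It remains to compute each diagonal block $P|_{V_i} = \frac{1}{|H|} \sum_{h \in H} \rho_i(h)$.

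Next, for any $g \in H$, left-invariance of summation over $H$ gives $\rho_i(g) P|_{V_i} = \frac{1}{|H|} \sum_{h \in H} \rho_i(gh) = P|_{V_i}$, and similarly $P|_{V_i} \rho_i(g) = P|_{V_i}$, so $P|_{V_i}$ commutes with the image of $\rho_i$. Since $\rho_i$ is irreducible, Schur's lemma forces $P|_{V_i} = c_i \cdot 1$ for some scalar $c_i$. Taking traces,
\[ c_i \dim V_i = \tr(P|_{V_i}) = \frac{1}{|H|} \sum_{h \in H} \chi_i(h) = \langle \chi_i, \chi_{\triv} \rangle_H \text{,} \]
which equals $1$ if $\rho_i = \triv$ (and then $\dim V_i = 1$, so $c_i = 1$) and $0$ otherwise. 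In either case $c_i = \delta_{\rho_i = \triv}$, which is the stated value.

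There is no genuine obstacle here — the argument is a textbook assembly of $H$-invariance of the $V_i$, the commutation trick with $\rho_i(g) P|_{V_i} = P|_{V_i}$, and a trace computation via Schur. The only care needed is to note that both the trivial and non-trivial cases land on the single formula $c_i = \delta_{\rho_i = \triv}$.
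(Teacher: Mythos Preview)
Your proof is correct and essentially the same as the paper's: both reduce to Schur's lemma on each diagonal block and then compute the scalars via a trace/character-orthogonality argument. The only stylistic difference is that you obtain block-diagonality directly from the $H$-invariance of each $V_i$, whereas the paper observes $P \in \End_H(\rho)$ and applies Schur to every block entry $T_{ij} \in \Hom_H(\rho_j,\rho_i)$.
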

\begin{proof}
    Clearly $\sum_{h \in H} \rho(h) \in \End_H(\rho)$, so in the block-matrix decomposition
    \[ \sum_{h \in H} \rho(h) = \begin{pmatrix}
        T_{11} & \cdots & T_{1m} \\
        \vdots & \ddots & \vdots \\
        T_{m1} & \cdots & T_{mm}
    \end{pmatrix} \]
    we have $T_{ij} \in \Hom_H(\rho_j,\rho_i)$, and then by Schur's lemma we have
    \[ \sum_{h \in H} \rho(h) = \begin{pmatrix}
        t_1 \cdot 1 & \multicolumn{2}{c}{\raisebox{-1ex}{\large $0$}} \\
        & \ddots & \\
        \multicolumn{2}{c}{\raisebox{1ex}{\large $0$}} & t_m \cdot 1
    \end{pmatrix} \]
    for some scalars $t_1,\ldots,t_m$. Write $\chi_i$ for the character of $\rho_i$, so
    \[ \la 1,\chi_i \ra = \frac{1}{| H |} \sum_{h \in H} \Tr(\rho_i(h)) = \frac{1}{| H |} \Tr(t_i \cdot 1) = \frac{1}{| H |} t_i \dim(\rho_i) \]
    and by the orthogonality relations we have $t_i = \delta_{\rho_i=\triv} \cdot | H |$.
\end{proof}

The final general fact combines the previous two:

\begin{lem}
    \label{lem:Central}
    The element
    \[ \sum_{j=1}^n \sum_{h \in H_j} h \in \bC[G] \]
    of the group algebra is central. If $\rho$ is irreducible, then
    \[ \sum_{j=1}^n \sum_{h \in H_j} \rho(h) = \frac{n | H | \mult(\triv,\Res_H^G(\rho))}{\dim(\rho)} \cdot 1 \text{.} \]
\end{lem}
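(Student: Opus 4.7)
The plan is to first establish centrality by a direct conjugation argument, and then reduce the scalar computation to the previous two lemmas via a trace computation.

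For centrality: I would observe that conjugation by any $g \in G$ permutes the set $\{H_1,\ldots,H_n\}$ of distinct conjugates of $H$. Indeed, $g H_j g^{-1} = (g g_j) H (g g_j)^{-1}$ is again a conjugate of $H$, hence equals some $H_{\sigma_g(j)}$, and $\sigma_g$ is easily seen to be a permutation of $\{1,\ldots,n\}$. Consequently, $g \bigl( \sum_j \sum_{h \in H_j} h \bigr) g^{-1} = \sum_j \sum_{h \in H_{\sigma_g(j)}} h = \sum_j \sum_{h \in H_j} h$, which is the definition of being central in $\bC[G]$.

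For the scalar, if $\rho$ is irreducible then by Schur's lemma the image of a central element is a scalar multiple of the identity, say $c \cdot 1$. Taking traces on both sides of $\sum_j \sum_{h \in H_j} \rho(h) = c \cdot 1$ yields $c \dim(\rho)$ on the right, so the task reduces to computing the trace on the left. For fixed $j$, \cref{lem:ResMult} (applied with $H$ replaced by $H_j$) says that $\frac{1}{|H_j|} \sum_{h \in H_j} \rho(h)$ is a projection onto the trivial isotypic component of $\Res_{H_j}^G(\rho)$, whose dimension is $\mult(\triv,\Res_{H_j}^G(\rho))$. By the preceding lemma, $\Res_{H_j}^G(\rho) \simeq \Res_H^G(\rho)$, so the multiplicity is independent of $j$ and equals $\mult(\triv,\Res_H^G(\rho))$. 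Since $|H_j|=|H|$ for every $j$, tracing and summing over $j$ gives
\[ \Tr\!\left( \sum_{j=1}^n \sum_{h \in H_j} \rho(h) \right) = n \, |H| \, \mult(\triv,\Res_H^G(\rho)) \text{,} \]
from which the stated formula for $c$ follows by dividing by $\dim(\rho)$.

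I do not expect any real obstacle here: both halves are essentially bookkeeping on top of \cref{lem:ResMult} and the conjugation-invariance lemma. The only thing to be careful about is that the $H_j$ need not be disjoint as subsets of $G$, so $\sum_j \sum_{h \in H_j} h$ should genuinely be read as a sum with multiplicities (i.e.\ as a sum over the indexing set $\{(j,h) : h \in H_j\}$), and the conjugation argument still goes through because the permutation $\sigma_g$ acts on this indexing set in a way that preserves the multiset of summands.
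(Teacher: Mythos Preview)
Your proposal is correct and follows essentially the same route as the paper: centrality via the observation that conjugation permutes the set $\{H_1,\ldots,H_n\}$, then Schur's lemma plus a trace computation that invokes \cref{lem:ResMult} for each $H_j$ and the conjugation-invariance lemma to identify all the multiplicities with $\mult(\triv,\Res_H^G(\rho))$. Your added remark about reading the double sum with multiplicities is a helpful clarification but does not alter the argument.
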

\begin{proof}
    The set $\{ H_1,\ldots,H_n \}$ is permuted by elements of $G$ acting by conjugation, so
    \begin{align*}
        g \left( \sum_{j=1}^n \sum_{h \in H_j} h \right) g^{-1} &= \sum_{j=1}^n \sum_{h \in H_j} g h g^{-1} \\
        &= \sum_{j=1}^n \sum_{h \in g H_j g^{-1}} h \\
        &= \sum_{j=1}^n \sum_{h \in H_j} h
    \end{align*}
    for $g \in G$, which is the first claim. If $\rho$ is irreducible, then by Schur's lemma,
    \[ \sum_{j=1}^n \sum_{h \in H_j} h = t \cdot 1 \]
    for some $t \in \bC$. To find $t$, observe that
    \begin{align*}
        \dim(\rho) t &= \Tr \left( \sum_{j=1}^n \sum_{h \in H_j} \rho(h) \right) \\
        &= \sum_{j=1}^n \Tr \left( \sum_{h \in H_j} \rho(h) \right) \\
        &= \sum_{j=1}^n | H_j | \mult(\triv,\Res_{H_j}^G(\rho)) \\
        &= n | H | \mult(\triv,\Res_H^G(\rho))
    \end{align*}
    so the claim follows.
\end{proof}

Finally, let us specialize to the symmetric group $S_k$. The key point of this section, in relation to the problem considered in this paper, is that the constraint $\tau \leq \pi$ in the sum
\[ \sum_{\substack{\tau \in S_k \\ \tau \leq \pi}} \chi^{\lambda}(\sigma \tau) \]
is actually carving out a well-known subgroup of $S_k$:

\begin{nota}[Young subgroup conjugates]
    For $\pi \in P(k)$, write $S_{\pi}$ for the subgroup of $S_k$ consisting of the permutations for which the blocks of $\pi$ are invariant. Clearly, if $\pi = \{ V_1,\ldots,V_m \}$, then
    \[ S_{\pi} \simeq S_{| V_1 |} \times \cdots \times S_{| V_m |} \]
    and the right-hand side is the Young subgroup corresponding to the composition $(| V_1 |,\ldots,| V_m |)$ of $k$, but the notation $S_{\pi}$ retains some more information about the blocks of $\pi$.
\end{nota}

The main result needed here which is particular to the symmetric groups is sometimes called \emph{Young's rule}:

\begin{thm}
    \label{thm:FrobeniusYoung}
    For $\lambda,\mu \vdash k$, the multiplicity of $V^{\lambda}$ in the permutation module $\Ind_{S_{\mu}}^{S_k}(\triv)$ is the Kostka number $K(\lambda,\mu)$.
\end{thm}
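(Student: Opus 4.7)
The plan is to reduce the multiplicity to a Hall inner product of symmetric functions via the Frobenius characteristic map, where the Kostka numbers appear directly from the classical expansion of complete homogeneous symmetric functions into Schur functions.

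To begin, Frobenius reciprocity gives
\[ \mult(V^\lambda, \Ind_{S_\mu}^{S_k}(\triv)) = \la \chi^\lambda, \Ind_{S_\mu}^{S_k}(\triv) \ra_{S_k} = \la \Res^{S_k}_{S_\mu}(\chi^\lambda), \triv \ra_{S_\mu} \text{.} \]
Next, I would invoke the Frobenius characteristic map $\mathrm{ch}$, which is an isometry from the space of class functions on $S_k$ (with the standard character pairing) onto the degree-$k$ piece of the algebra of symmetric functions (with the Hall pairing). Under $\mathrm{ch}$, the irreducible character $\chi^\lambda$ corresponds to the Schur function $\sfs_\lambda$, while --- by the standard computation of the character of a permutation module over $S_k$ --- the induced character $\Ind_{S_\mu}^{S_k}(\triv)$ corresponds to the complete homogeneous symmetric function $\mathsf{h}_\mu := \mathsf{h}_{\mu_1} \cdots \mathsf{h}_{\mu_{\ell(\mu)}}$, where $\mathsf{h}_n := \sum_{\nu \vdash n} \sfm_\nu$. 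So the multiplicity equals $\la \sfs_\lambda, \mathsf{h}_\mu \ra$.

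To finish, I would cite the classical expansion $\mathsf{h}_\mu = \sum_{\lambda \vdash k} K(\lambda,\mu) \sfs_\lambda$, which is one of the standard characterizations of the Kostka numbers (provable combinatorially via semistandard tableaux, or via iterated application of the Pieri rule). Combined with the orthonormality of the Schur basis with respect to the Hall pairing, this immediately yields $\la \sfs_\lambda, \mathsf{h}_\mu \ra = K(\lambda,\mu)$, as desired.

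There is no substantive combinatorial or computational obstacle here --- every ingredient (Frobenius reciprocity, the basic properties of $\mathrm{ch}$, and the Kostka expansion of $\mathsf{h}_\mu$) is textbook material, e.g.\ in \cite[Chapter I]{M-book} or \cite{CST-book}. The only real task is to organize the citations cleanly, since the result is being quoted rather than reproved from scratch. A more direct alternative would be to construct the $K(\lambda,\mu)$ copies of $V^\lambda$ inside $\Ind_{S_\mu}^{S_k}(\triv)$ explicitly using Specht modules and Young symmetrizers, indexed by semistandard tableaux of shape $\lambda$ and weight $\mu$; this is more concrete but requires substantially more setup.
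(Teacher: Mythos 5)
The paper does not actually prove this theorem: it is stated as a classical result (``Young's rule'') and cited without argument, so there is no internal proof to compare against. Your argument is the standard one, and it is correct: Frobenius reciprocity reduces the multiplicity to a Hall inner product; under the Frobenius characteristic isometry, $\chi^\lambda$ corresponds to the Schur function $\sfs_\lambda$ and the character of $\Ind_{S_\mu}^{S_k}(\triv)$ corresponds to the complete homogeneous product $\mathsf{h}_{\mu_1}\cdots\mathsf{h}_{\mu_{\ell(\mu)}}$; and the Kostka expansion of that product into Schur functions, combined with Schur orthonormality, gives exactly $K(\lambda,\mu)$. All three ingredients are textbook, precisely as you note, and can be found in \cite[Chapter I]{M-book} or \cite{CST-book}. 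One cosmetic remark: the paper never introduces the complete homogeneous symmetric functions (only $\sfm$, $\sfe$, and $\sfs$ are defined), so if this proof were to be inserted, the notation $\mathsf{h}$ would need to be defined; the paper avoids this by simply quoting the result.
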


\begin{proof}[Proof of \cref{prop:Multiple}]
    In light of \cref{lem:Central}, the remaining tasks are to count
    \begin{enumerate}
        \item $\{ \pi \in P(k) : t(\pi) = 2_k^q \}$,
        \item the order of $S_{2_k^q}$,
        \item the multiplicity of the trivial representation in $\Res_{S_{\pi}}^{S_k}(V^{\lambda})$, and
        \item the dimension of $V^{2_k^p}$;
    \end{enumerate}
    in particular, we want the multiplicity in (3) to be $0$ whenever $\mu \not\trianglelefteq \lambda$. There is a well-known formula for (1), reproduced in e.g. \cite[Lemma 2.4]{AP18}, and the case of $2_k^q$ comes out as $\frac{k!}{2^q q! (k-2q)!}$. For (2), we already know $S_{2_k^q} \simeq \bfZ_2^q$ which has order $2^q$. For (3) and (4), we appeal to \cref{thm:FrobeniusYoung} and \cref{lem:Hooks} respectively.
\end{proof}

\section*{Acknowledgements}

The author wishes to thank Alexandru Nica and Daniel Perales for their helpful feedback and encouragement at various stages of this project.

\end{document}